\newtheorem{proposition}{Proposition}
\newtheorem{lemma}{Lemma}
\newtheorem{theorem}{Theorem}
\newtheorem{corollary}{Corollary}
\newtheorem{definition}{Definition}
\newcommand{\tr}{\mathop{\fam0 Tr}\nolimits}
\newcommand{\Diff}{\mathop{\fam0 Diff}\nolimits}
\newcommand{\Hom}{\mathop{\fam0 Hom}\nolimits}
\newcommand{\End}{\mathop{\fam0 End}\nolimits}
\newcommand{\id}{\mathop{\fam0 Id}\nolimits}
\newcommand{\Lie}[1]{\mbox{\sf #1}}
\newcommand{\Aut}{\mathop{\fam0 Aut}\nolimits}
\newcommand{\Id}{\mathop{\fam0 Id}\nolimits}
\newcommand{\R}[1]{{\mathbb R}^{#1}}
\newcommand{\bC}{{\mathbb C}}
\newcommand{\bR}{{\mathbb R}}
\newcommand{\C}{C}
\newcommand{\Z}{{\mathbb Z}}
\newcommand{\N}{{\mathbb N}}
\newcommand{\ra}{\mathop{\fam0 \rightarrow}\nolimits}
\newcommand{\cH}{ {\mathcal H}}
\newcommand{\T}{ {\mathcal T}}
\renewcommand{\P}{ {\mathbb P}}
\newcommand{\delbar}{\bar{\partial}}
\newcommand{\del}{\partial}
\newcommand{\set}[1]{\left\{ #1\right\}} 
\newcommand{\bH}{\mathbb{H}}
\newcommand{\cv}[1]{\nabla_{#1}} 
\newcommand{\lp}[1]{\Delta_{#1}} 
\newcommand{\HC}{\boldsymbol{\nabla}} 
\DeclareMathOperator{\ima}{Im} 
\newcommand{\iprod}[1]{\left\langle #1 \right\rangle} 
\newcommand{\pk}[0]{\pi^{(k)}} 
\begin{document}

\title[Hitchin Connection for a large class of complex structures]{A Hitchin Connection for a large class\\ of families of Kähler Structures}

\author{J{\o}rgen Ellegaard Andersen \& Kenneth Rasmussen}
\address{Department of Mathematics\\
        University of Aarhus\\
        DK-8000, Denmark}
\email{andersen@imf.au.dk}

\thanks{Supported in part by the center of excellence grant "Center for quantum geometry of Moduli Spaces" (DNRF95) from the Danish National Research Foundation. }

\begin{abstract}
In this paper we construct a Hitchin connection in a setting, which significantly generalizes the setting covered by the first author in \cite{A5}, which in turn was a generalisation of the moduli space case covered by Hitchin in his original work on the Hitchin connection \cite{H}. In fact, our construction provides a Hitchin connection, which is a partial connection on the space of all compatible complex structures on an arbitrary, but fixed prequantizable symplectic manifold, which satisfies a certain Fano type condition. The subspace of the tangent space to the space of compatible complex structures on which the constructed Hitchin connection is defined, is in fact of finite co-dimension, if the symplectic manifold is compact. In a number of examples, including flat symplectic space, symplectic tori and moduli spaces of flat connections for a compact Lie group, we prove that our Hitchin connection is defined in a neighbourhood of the natural families of complex structures compatible with the given symplectic form, which these spaces admits.
\end{abstract}

\maketitle

\begin{center}
\thanks{Dedicated to Nigel Hitchin at the conference {\em Hitchin70},\\ celebrating his 70'th Birthday.}
\end{center}

\section{Introduction}

In this paper we construct a Hitchin connection in a setting, which generalises earlier work done by the first author in \cite{A5}, where the Hitchin connection is constructed under the rather restrictive assumption that the family of complex structures has the so called rigid property, which was also the case for the moduli space case, in which Hitchin constructed his connection first \cite{H}. This means that the corresponding deformations of the metric is by the real part of a global holomorphic symmetric tensor, as we recall in details below. In particular if a given K\"{a}hler manifold has no global holomorphic symmetric tensors beside zero, then the approach of \cite{A5} does not apply, hence the wording rigid for such families - they constitute rather thin slices in the infinite dimensional space of all complex structures. In this paper we relax this condition considerably.

In order to describe our generalisation, let us briefly introduce the setting. 
We let $(M,\omega)$ be a symplectic manifold. We let $\mathcal{T}$ be a complex manifold parametrizing a holomorphic family $J\colon \mathcal{T} \to C^\infty(M, \End(TM))$ of complex structures, which are all Kähler with respect to $\omega$. We will write $M_\sigma$, when we refer to the complex manifold $(M,J_\sigma)$, where $\sigma$ is any point in $\mathcal{T}$.

We will consider the variation of the family $J$ along a real vector field $V$ on $\mathcal{T}$, which we denote $V[J]$. We consider the splitting of $V = V' + V''$ into types on $\mathcal{T}$ and we consider the symmetric bi-vector field $G(V')=V'[J]\cdot\tilde{\omega}$, where $\tilde{\omega}$ is the bivector field, inverse to $\omega$. We think of $G$ as a one form on $\mathcal{T}$ with coefficients in bi-vector fields and as such, we write $G(V) = G(V')$. Observe that if $g$ is the corresponding family of K\"{a}hler metrics parametriced by $\mathcal{T}$, then we have that
$$ V[g] = G(V) + \overline{G(V)}.$$

The assumption that the {\em family} $J$ is rigid, says that $G(V)$ defines a holomorphic section $G(V)_\sigma\in H^0(M_\sigma,S^2(T'M_\sigma))$ at all points $\sigma \in \mathcal{T}$. This is of course a very restrictive condition, however, it is satisfied in the setting, in which Hitchin initially introduced his connection, which was the case of Teichm\"{u}ller space parametrising K\"{a}hler structures for the Seshadri-Atiyah-Bott-Goldman symplectic form \cite{AB,NS1,NS2} on the moduli spaces of flat $SU(n)$ connections on a genus $g$ surface \cite{H}. See also the work of Axelrod, Della Pietra and Witten for a physical derivation of this connection using Chern-Simons theory \cite{ADW} and \cite{A5} for a verification that the two connections agree.

In this paper, we weaken the rigid criterion by adding the possibility of varying the bi-vector field $G(V)$ by adding  a term of the form $\delbar\beta(V)\cdot \tilde \omega$ for an arbitrary vector field $\beta(V)_\sigma\in C^\infty(M_\sigma, T'M_\sigma)$. 

\begin{definition}
We call the family {\em weakly restricted} if there exist a one form $\beta$ on $\mathcal{T}$ with values in $C^\infty(M_\sigma, T'M_\sigma)$ at each point $\sigma\in \mathcal{T}$, such that for all vector fields $V$ along $\mathcal{T}$ and all $\sigma\in\mathcal{T}$, there exist $G_\beta(V)_\sigma \in H^0(M, S^2(T'M_\sigma))$ such that 
\begin{equation}\label{Gb}
G_{\beta}(V)_\sigma\cdot \omega = V'[J]_\sigma + \delbar \beta(V)_\sigma.
\end{equation}
\end{definition}

The main result in this article is the construction of a Hitchin connection, when we assume the family to be weakly restricted, on top of a couple of further minor topological assumptions.

It is of course interesting to investigate, when we can solve the weakly restricted criterion. We let $\mathcal{C}_\omega$  be the space of all complex structures on $M$ compatible with the symplectic form $\omega$ and let $J\in \mathcal{C}_\omega(M)$. Then we have that
\begin{equation*}
  T_J\mathcal{C}_\omega = \ker( \delbar_J \colon \Omega^{0,1}(M,T'M_J)_\omega \to \Omega^{0,2}(M,T'M_J)),
\end{equation*} 
where 
\begin{equation}\label{sym01}
\Omega^{0,1}(M,T'M_J)_\omega = \{ V_J\in \Omega^{0,1}(M,T'M_J) \mid \omega(V_J \cdot, J \cdot) = \omega(\cdot, J V_J \cdot)\},
\end{equation}
which is the same as stating that $V_J$ is symmetric with respect to the K\"{a}hler metric $g_J$ associated to $\omega$ and $J$. Thus,  we see that given $V_J\in T_J\mathcal{C}_\omega$, we can solve the weakly restricted condition, e.g. find $\beta(V)$, whenever we have
\begin{align*}
  V_J \in H^0(M_J, S^2(T'M_J))\cdot\omega +
  &\ima(\delbar_J \colon C^\infty(M,T'M_J)_\omega \to \Omega^{0,1}(M,T'M_J)_\omega).
\end{align*}
where 
$$C^\infty(M,T'M_J)_\omega = \{ X\in C^\infty(M,T'M_J) \mid \bar\partial (i_X\omega) =0 \}.
$$	
Thus if the map
$$ \cdot \omega : H^0(M_J, S^2(T'M_J)) \ra  H^1(M_J, T'M_J)_\omega$$
is surjective, this is always possible. Here $H^1(M_J, T'M_J)_\omega$ is defined in analogy with (\ref{sym01}), namely to be the symmetric part of this cohomology.

A particular simple case, where we can always solve (\ref{Gb}) is of course if
\begin{align*}
  H^1(M_J, T'M_J)_\omega = 0.
\end{align*}
In general we can solve the equation (\ref{Gb}) if the cohomology class of $V'[J]_\sigma$ is contained in the image of $\cdot\omega$. Thus our construction will only provide a partial connection on the space of all complex structures compatible with the symplectic form on $\omega$. If $M$ is compact, we see that this partial connection is defined on a subspace of finite co-dimension of the tangent space to the space of all complex structures compatible with $\omega$.

Let us now briefly recall the setup in geometric quantization. Let $(M, \omega)$ be a symplectic manifold and assume that $(M, \omega)$ admits a prequantum line bundle $(\mathcal{L},\nabla, \langle\cdot,\cdot\rangle)$ \cite{W}. Let $\mathcal{T}$ be a complex manifold parametrizing a holomorphic family of complex structures $J$ making $(M, \omega, J_\sigma)$ Kähler for each $\sigma \in \mathcal{T}$. Now for each $\sigma\in\mathcal{T}$ we consider the quantum space at level $k\in \N$, which is the subspace $H^{(k)}_\sigma$ of the prequantum space $\mathcal{H}^{(k)} = C^\infty(M, \mathcal{L}^k)$ consisting of holomorphic sections
$$ H^{(k)}_\sigma = H^0(M_\sigma, \mathcal{L}^k) \subset \mathcal{H}^{(k)}.$$
We will assume that these quantum spaces form a smooth subbundle $H^{(k)}$ of the trivial bundle
\begin{equation*}
  \hat{\mathcal{H}}^{(k)}=\mathcal{T}\times \mathcal{H}^{(k)}.
\end{equation*}

Now we let $\cv{}^T$ denote the trivial connection on $\hat{\mathcal{H}}^{(k)}$, and then we consider a connection of the form 
\begin{equation}
  \label{eq:18}
  \HC_V=\cv{V}^T + u(V),
\end{equation}
where $u\in \Omega^{1}(\mathcal{T},\mathcal{D}(M,\mathcal{L}^k))$ is a one-form on $\mathcal{T}$ with values in the space of differential operators on sections of $\mathcal{L}^k$. Our goal is to construct a $u$, such that $\HC$ preserves the quantum spaces $H^{(k)}_\sigma$ inside each fiber of $\hat{\cH}^{(k)}$.
\begin{definition}[Hitchin connection]
  \label{def:2}
  A Hitchin connection in the bundle $\hat{\cH}^{(k)}$ is a connection of the form \eqref{eq:18}, that preserves the subspaces $H^{(k)}_\sigma$ inside each fiber of $\hat{\cH}^{(k)}$. 
\end{definition}

We prove the following theorem in this paper.

\begin{theorem}[Hitchin connection for weakly restricted families]
  \label{thm:2}
  Let $(M, \omega)$ be a symplectic manifold with a  prequamtum line bundle $\mathcal{L}$. Assume that $M$ has first Chern class of the form $c_1(M,\omega)= n \left[\frac{\omega}{2\pi} \right]$ for some integer $n \in \Z$ and such that $b_1(M) = 0$. Furthermore, let $J\colon \mathcal{T} \to C^\infty (M, \End(TM))$ be a weakly restricted, holomorphic family of Kähler structures on M, parametrized by a complex manifold $\mathcal{T}$, and assume that the family admits a family of Ricci potentials $F$.
  Then there exists a Hitchin connection $\HC$ in the bundle $\hat{H}^{(k)}$ over $\mathcal{T}$, given by the expression
  $$   \HC_V= \cv{V}^T + u(V), $$ 
  where
    \begin{align}
    \label{eq:6}
 u(V)&=\frac{1}{2(2k+n)}(\lp{G_{\beta}(V)} + 2 \cv{G_{\beta}(V)\cdot dF}-i(2k+n) \cv{\beta(V)} \nonumber\\
      &+4k V'[F] -2ik dF \cdot \beta(V) -ik\delta(\beta(V)) +2k(k+n)\phi(V)+ik\psi(V)), \nonumber
  \end{align}
  and $\phi(V),\psi(V) \in C^\infty(M)$ are smooth functions, satisfying
$$\delbar \phi(V)=\omega\cdot \beta(V) \quad \text{and} \quad \delbar\psi(V)=\Omega(V),$$
where $\Omega(V) \in \Omega^1(M)$ is given by
  \begin{equation*}
    \Omega(V)
    =-\delta(G_{\beta}(V))\cdot \omega +\delta(\delbar\beta(V)) - 2dF \cdot G_{\beta}(V)\cdot \omega + 2\delbar \beta(V) \cdot dF+ 4i\delbar V'[F].
  \end{equation*}
Furthermore, if non of the complex structures admit non-constant holomorphic functions on $M$, which is true for instance if $M$ is compact, we get that $\psi(V)=0$.
 \end{theorem}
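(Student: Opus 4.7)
The plan is to verify directly that the explicit $u(V)$ makes $\HC$ preserve the subbundle $H^{(k)}\subset\hat{\cH}^{(k)}$. Since $\cv{V}^T$ is the trivial connection, differentiating $\delbar_\sigma s(\sigma)=0$ in the direction of $V$ shows that preservation is equivalent to the pointwise identity
\begin{equation*}
\delbar_\sigma\bigl(u(V)_\sigma\,s\bigr) \;=\; V[\delbar]_\sigma\,s\qquad\text{for every } s\in H^{(k)}_\sigma,
\end{equation*}
where $V[\delbar]_\sigma$ is the variation of the $\delbar$-operator on $\L^k$ induced by $V[J]_\sigma$. A standard Kähler calculation expresses $V[\delbar]_\sigma s$ as an explicit first-order operator in $V'[J]_\sigma$ acting on $\nabla s$, which is the same target identity as in \cite{H,A5}; the task is to produce a $u(V)$ solving it without the rigidity assumption.

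Next I would substitute the weakly restricted decomposition $V'[J]_\sigma = G_\beta(V)_\sigma\cdot\omega - \delbar\beta(V)_\sigma$ and treat the two contributions separately. The $G_\beta(V)\cdot\omega$ piece is handled exactly as in \cite{A5}: one shows that on $H^{(k)}_\sigma$ the operator $\lp{G_\beta(V)} + 2\cv{G_\beta(V)\cdot dF} + 4kV'[F]$ reproduces $2(2k+n)$ times the desired contribution, using the holomorphicity of $G_\beta(V)$, the prequantum curvature $F_{\L^k}=-ik\omega$, the Ricci identity $\Ric=n\omega+2i\partial\delbar F$, and the Fano-type relation $c_1(M,\omega)=n[\omega/2\pi]$. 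Together these pin down the prefactor $\tfrac{1}{2(2k+n)}$ and the role of the Ricci potential.

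For the new $\delbar\beta(V)\cdot\nabla s$ piece I would commute $\delbar$ past a first-order operator in $\beta(V)$. The Kähler identities together with the divergence relation show that $-i(2k+n)\cv{\beta(V)}-2ik\,dF\cdot\beta(V)-ik\,\delta(\beta(V))$ contributes the required piece on holomorphic sections, up to a residual $\delbar$-closed $(0,1)$-form multiplier. This residue is absorbed by the scalar corrections $2k(k+n)\phi(V)+ik\psi(V)$: the function $\phi(V)$ exists because $\omega\cdot\beta(V)$ is a $\delbar$-closed $(0,1)$-form (by the $\omega$-compatibility encoded in weak restrictedness) and hence, by $b_1(M)=0$ combined with Hodge theory on Kähler manifolds, is $\delbar$-exact; similarly $\psi(V)$ exists once one verifies by direct computation that $\Omega(V)$ is $\delbar$-closed. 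When $M_\sigma$ carries no non-constant holomorphic functions, the Hodge decomposition forces $\Omega(V)\equiv 0$, whence $\psi(V)=0$.

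The main obstacle is the lengthy commutator calculus underlying the identifications above: one must simultaneously manage $[\delbar,\lp{G_\beta(V)}]$, $[\delbar,\cv{G_\beta(V)\cdot dF}]$ and $[\delbar,\cv{\beta(V)}]$ on sections of $\L^k$, with curvature contributions coming from $F_{\L^k}=-ik\omega$ and from $\Ric$, and balance the resulting $k$-dependent coefficients against the $k$-independent target $V[\delbar]s$. The precise expression for $\Omega(V)$ displayed in the statement is forced on one as the $(0,1)$-form residue left behind once all the preceding terms of $u(V)$ have been extracted.
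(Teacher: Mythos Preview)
Your approach is essentially that of the paper: reduce to the identity $\cv{}^{0,1}u(V)s=\frac{i}{2}V'[J]\cdot\cv{}^{1,0}s$ on holomorphic sections (the paper's Lemma~\ref{lem:3}), compute $\cv{}^{0,1}$ of $\lp{G_\beta(V)}$, $\cv{G_\beta(V)\cdot dF}$ and $\cv{\beta(V)}$ via curvature commutators (Lemmas~\ref{lem:4}, \ref{lem:5}, \ref{lem:7} and Corollary~\ref{cor:1}), and absorb the zeroth-order residue into $\phi(V)$ and $\psi(V)$.

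There is, however, one genuine gap. You assert that $\psi(V)$ exists once one checks that $\Omega(V)$ is $\delbar$-closed. But $\delbar$-closedness of a $(0,1)$-form together with $b_1(M)=0$ does \emph{not} by itself yield $\delbar$-exactness; for that you would need $H^{0,1}(M_\sigma)=0$, which follows from Hodge theory only in the compact K\"ahler case, and the theorem does not assume compactness. The paper instead proves the stronger statement that $\Omega(V)$ is $d$-closed (Lemma~\ref{lem:1}): then $H^1(M,\bR)=0$ gives $\Omega(V)=d\psi(V)$, and the $(0,1)$-type forces $\partial\psi(V)=0$, so $\Omega(V)=\delbar\psi(V)$ (and $\psi(V)$ is constant when there are no non-constant holomorphic functions). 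The proof of $d$-closedness is the one non-obvious ingredient you have not identified: it comes from differentiating the Ricci-potential relation $\rho_\sigma=n\omega+2id\delbar_\sigma F_\sigma$ along $V'$ and comparing with the variation formula $V'[\rho]=\tfrac12\, d(\delta\tilde G(V')\cdot\omega)$ (Proposition~\ref{pro:7}), the latter obtained from the Bianchi identity for the induced connection on the canonical line bundle $\hat K$ of the family. Without this step the existence of $\psi(V)$, and hence the construction of $u(V)$, is not established in the stated generality.
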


We also remark that in the case where $M$ is compact, Hodge theory will provide us with a family of Ricci potentials and of course there will in that case only be constant holomorphic functions globally on $M$, so these two assumptions can be ignored in the compact case, reducing the assumptions to only two cohomological restrictions. We of course expect that the Fano type condition
$$c_1(M,\omega)= n \left[\frac{\omega}{2\pi} \right]$$
can be removed by doing metaplectic correction as considered in \cite{AGL}.

The condition $b_1(M)=0$ is only used to ensure that the closed 1-form $\Omega(V)$ is exact, such that $\psi(V)$ exists. In other words, if we already have a $\bar\partial$-primitiv for $\Omega(V)$ for all $V$, this assumption can also be ignored. See details in the proof of proposition \ref{pro:1}.

Observe that when $\beta(V)=0$, the family is rigid and this new Hitchin connection restricts to the Hitchin connection in \cite{A5}.

We stress that we do not need that $\mathcal{T}$ is a complex manifold, in fact we have a complete analog of Theorem \ref{thm:2} in this case. Please see Theorem \ref{thm:3} in section \ref{sec:hitch-conn-smooth}.

In the case of rigid families, it was proved that the Hitchin connection in this setting is projectively flat by the first author and Gammelgaard in \cite{AG1} generalizing the projective flatness proofs of Hitchin in \cite{H} and Axelrod, Della Pietra and Witten in \cite{ADW} in the original moduli space setting (see also \cite{vGdJ,R1}). The projective flatness is of course very importance for its relation to quantum Chern-Simons theory, in particular in relation to the projective representations of the mapping class groups which rises. These actually are the same as the Witten-Reshetikhin-Turaev TQFT (WRT-TQFT) \cite{RT1,RT2,T,B1,BHMV1,BHMV2} representations as it follows by the combination of two results. One by Laszlo \cite{La1}, which identifies the Hitchin connection projectively with the TUY-connection, constructed in conformal field theory by Tsuchiya, Ueno and Yamada \cite{TUY}. The second by the first author and Ueno, which identifies the TQFT's comes from conformal fields theory for the affine Lie-algebra of $\Lie{sl}(m,{\mathbb C})$ and then the WRT-TQFT constructed from $U_q(\Lie{sl}(m,{\mathbb C}))$, for $q= \exp(2\pi i /(k+n))$ \cite{AU1,AU2,AU3,AU4} as proposed in \cite{W1}. This has been exploited to prove a number of results about the WRT-TQFT \cite{A2,A3,A4,A5,AH,A6, AHJMMc}. 

In the case of weakly restricted familie, we cannot expect to prove projective flatness in general for such families due to the No-Go theorem of \cite{GM}. It is however very natural to ask for the existence of a Hitchin connection given by differential operators and not just by Toeplitz operators as the $L^2$-induced connection would be. We also expect that the Hitchin connection in some sense minimise the possible curvature, something which we hope to return to in our future work.

In the final two example sections we illustrate the applicability of our construction. We show that our construction applies to certain open subsets of the entire family of all complex structures on ${\mathbb R}^{2n}$ with the standard symplectic structure and certain open subsets of  the entire family of all complex structures on ${\mathbb R}^{2n}/{\mathbb Z}^{2n}$ again with the standard symplectic structure.  Further, our construction also applies to certain open subsets of the entire family of all complex structures on co-adjoint orbits and on the moduli spaces of flat $SU(n)$-connections on a surface of genus $g>1$, possibly with central holonomy around a point on the surface, with the Seshadri-Atiyah-Bott-Goldman symplectic structure on it.

\section{Quantization}
\label{sec:quantization}
In this chapter we will rather briefly introduce the mathematical theory of quantization and explain some of the problems that arise, when we try to define a mathematically rigid theory of quantization. One of the main points for us is the need to choose a polarization, which in our case wil be a Kähler structure compatible with the symplectic form on the manifold. This choice is not canonical and is auxiliary to the physical theory, and therefore we would suspect that the theory should, in some sense, be independent of this choice. The Hitchin connection aims to relate these different choices.

A quantization scheme is in the simplest form, a way to pass from classical mechanics to quantum mechanics. That is, to a system in classical mechanic, in the form of a phase space consistenting of a symplectic $2n$ dimensional manifold $(M, \omega)$, it assigns a corresponding Hilbert space $\mathcal{H}$ of quantum states, and to a classical observable given by a smooth function $f\in C^\infty(M)$ it assigns a self adjoint operater $Q(f)$ on $\mathcal{H}$.

For the theory to be physically sound, this assignment should be linear, send the constant function $1$ to the identity operator, and it should fullfill the commutation relation
\begin{equation}
  \label{eq:15}
   [Q(f),Q(g)] = i \hbar Q(\set{f,g}).
\end{equation}
Lastly, applying the quantization to $R^{2n}$ with the standard symplectic form should yield the canonical quantization (see \cite{W}). It has however been shown, that such a full quantization can't exit. The approach we will follow to handle this is to weaken the relation \eqref{eq:15}, so we only require the assymptotic relation
\begin{equation}
  \label{eq:20}
  [Q(f),Q(g)] = ih Q(\set{f,g}) + O(h^2) \quad \text{as} \quad h\to 0.
\end{equation}
We will only consider one type of quantization, namely geometric quantization.

\subsection{Prequantization}
\label{sec:prequantization}
The first step in geometric quantization is the prequantization. Here we construct a Hilbert space of quantum observables as sections of tensor powers of a so called \emph{prequantum line bundle} over the phase space $(M, \omega)$.
\begin{definition}[Prequantum line bundle]
  \label{def:3}
  A prequantum line bundle over the symplectic manifold $(M,\omega)$ is a triple $(\mathcal{L}, h, \cv{})$ consisting of a line bundle $\mathcal{L}$ over $M$ with a Hermitian metric $h$ and a compatible connection $\cv{}$, such that the curvature of $\cv{}$ is related to the symplectic form by the relation
\begin{equation}
  \label{eq:1}
  F_{\cv{}}= -i \omega.
\end{equation}
We say that a connection is compatible with the Hermitian structure $h$, if we have
\begin{equation}
  \label{eq:44}
  X h(s_1,s_2) = h(\cv{X}s_1,s_2) + h(s_1,\cv{X}s_2)
\end{equation}
for any vector field $X$ on $M$ and any two sections $s_1,s_2 \in C^\infty(M, \mathcal{L})$, and we call $(M,\omega)$ \emph{prequantizable}, if there exist a prequantum line bundle over it. 
\end{definition}
By looking at the real first chern class $\tilde{c_1}(\mathcal{L})$, it is seen that the condition \eqref{eq:1} is actually a restrictive condition, since $\tilde{c_1}(\mathcal{L})=\frac{i}{2\pi}[F_{\cv{}}]=[\frac{\omega}{2\pi}]$, which is not the case for all symplectic manifolds. It is however true, that $(M, \omega)$ is prequantizable, precisely when
\begin{equation*}
  \left[\frac{\omega}{2\pi}\right] \in \ima{( H^2(M, \Z) \to H^2(M, \bR) )},
\end{equation*}
which is the so-call \emph{prequantum condition}.
Now given a prequantum line bundle over $M$, we can define the \emph{prequantum space}. Here we remark, that a prequantum structure on a line bundle induces a prequantum structure on any tensorpower of the bundle, and we will also use $h$ and $\cv{}$ for the induced metric and connection.
\begin{definition}[Prequantum space]
  \label{def:4}
  The prequantum space of level $k\in\N$ is the infinite dimensional vector space of sections of the k'th tensor power of $\mathcal{L}$
  \begin{equation}
    \label{eq:2}
    \mathcal{H}^k = C^\infty(M, \mathcal{L}^k).
  \end{equation}
More precisely, we actually consider the $L^2$-completion with respect to the Hermitian inner product  on compactly supported sections, given by
\begin{equation}
  \label{eq:3}
  \iprod{s_1, s_2}
  =\frac{1}{n!}\int_M h(s_1, s_2) \omega^n.
\end{equation}
We will not distinguish between $\mathcal{H}^k$ and the completion in the following.
\end{definition}
Next we define the prequantum map, sending a function $f\in C^\infty(M)$ to an operator on $\mathcal{H}^k$ by the expression
\begin{equation}
  \label{eq:4}
  P_k(f)= -\frac{1}{k}\cv{X_f}+if,
\end{equation}
and with this definition, the prequantum map satisfies 
\begin{equation}
  \label{eq:5}
   [P_k(f),P_k(g)] = \frac{i}{k}P_k(\set{f,g}),
\end{equation}
and thus prequantization satisfies almost all the requirements for a quantization scheme. It does however fail to reproduce canonical quantization, since the prequantum space gives wave functions that depend independently on position and momentum. These are however not in line with canonical quantization, since we have twice as many degrees of freedom, as we are supposed too.

\subsection{Kähler Quantization}
\label{sec:polarization}
To deal with the problem, that prequantization produces a Hilbert space of twice the desired dimension, we introduce a polarization. We will only consider the case, where $(M, \omega)$ admits a compatible complex structure $J$ making $(M,\omega, J)$ Kähler and denote the Kähler manifold $M_J$. In this case we get complex structures on $\mathcal{L}^k$ given by $\delbar = \cv{}^{(0,1)}$, since $\omega$ has type $(1,1)$ with respect to $J$ and the prequantum condition thus ensures that $(F_{\cv{}})^{(0,2)}=0$. This means we can choose the quantum space to be the holomorphic sections
\begin{equation}
  \label{eq:22}
  H^{(k)}_J=H^0(M_J, \mathcal{L}^k)=\set{s\in \mathcal{H}^{(k)} \mid \cv{J}^{0,1}s=0}.
\end{equation}
This is a subspace of the prequantum space $\mathcal{H}^k$, and it is finite dimensional, if $M$ is compact. Now the prequantum operators do not in general preserve the subspace $H^{(k)}_J$, but since it is in fact a closed subspace, we have a projection map $$\pk_J\colon \mathcal{H}^k \to H^{(k)}_J,$$ and we just define the quantum operator by taking the prequantum operator and projecting the result back on $H^{(k)}_J$, that is
\begin{equation}
  \label{eq:23}
  Q_k(f)_J = \pk_J \circ P_k(f).
\end{equation}
With this construction we lose the commutation relation \eqref{eq:15}, but it can be shown that we still have the relation \eqref{eq:20} (see \cite{BMS, Sch3, Sch, Sch1, Sch2, Tuyn,KS}).

\section{Families of Kähler Structures}
\label{sec:famil-kahl-struct}
In this section we explore the properties of families of Kähler structures on a symplectic manifold $(M, \omega)$. We start out with a smooth family of integrable almost complex structures, all compatible with $\omega$, giving us a family of Kähler structures on $M$.

\subsection{Smooth Families of Kähler Structures}
\label{sec:smooth-famil-kahl}

We let $\mathcal{T}$ be a smooth manifold and $(M, \omega)$ a symplectic manifold. Then we say that $\mathcal{T}$ smoothly parametrizes a family of almost complex structures on $(M,\omega)$, if there exist a smooth map
\begin{equation}
  \label{eq:24}
  J\colon \mathcal{T} \to C^\infty(M, \End(TM)) \quad \text{mapping} \quad \sigma \mapsto J_\sigma
\end{equation}
such that $J_\sigma$ is an almost complex structure for each $\sigma\in \mathcal{T}$. We say that $J$ is smooth, when it defines a smooth section of the pullback bundle $\pi^*_M(\End (TM))$, where $\pi_M\colon \mathcal{T}\times M \to M$ is the projection on $M$.

We will look at the case where all $J_\sigma$ are integrable and compatible with the symplectic structure, such that $(M, \omega, J_\sigma)$ is Kähler for each $\sigma \in \mathcal{T}$.

A complex structure gives a splitting of the complexified tangent bundle, so now we have a splitting
\begin{equation*}
  TM_\bC = T'M_\sigma \oplus T''M_\sigma,
\end{equation*}
for each $\sigma \in \mathcal{T}$, into the holomorphic and anti-holomorphic parts, i.e. the $i$ and $-i$ eigenspaces of $J_\sigma$. 

 We denote the Kähler metric by $g$, and it is given by 
\begin{equation*}
  g=\omega \cdot J,
\end{equation*}
where the dot denotes contraction of tensors placed next to each other. We remark that for a vector $X$, we have the standard notation $i_X\omega=X\cdot \omega$. We will often need contraction of tensors and in more complicated expressions, we can't always indicate contraction by placing the tensors next to each other. For this, we will use abstract index notation to denote the entries of the tensor, and following the Einstein summation convention, repeated indices are contracted. In the same spirit, we use substript indices for covariant entries and superscript indices for contravariant entries. The contraction correspond to a summation in local coordinates, but our indices only name the entries of the tensor and do not represent a choice of local coordinates. Writing the expression from before in abstract index notation, would look like
\begin{equation*}
  g_{ab}=\omega_{au}J_b^u ,
\end{equation*}
and we remark, that we try to use the letters $a,b,c,d$ for entries, that are not contracted, and letters $u,v,w,x,y$ for contracted indices.

We will need the inverses of the metric and the symplectic form. These are the symmetric bivectorfield $\tilde{g}$ and antisymmetric bivectorfield $\tilde{\omega}$, such that 
\begin{equation*}
  g\cdot \tilde{g} = \tilde{g}\cdot g = \Id
  \quad \text{and} \quad
  \omega \cdot \tilde{\omega} = \tilde{\omega}\cdot \omega = \Id.
\end{equation*}
These exist, since the metric and symplectic forms are nondegenerate. We will sometimes use $g$ and $\tilde{g}$ to either lower or raise an index by contraction. It is also useful to record, that the relation between $g$ and $\omega$ implies that 
\begin{equation*}
  \tilde{g}= -J \cdot \tilde{\omega}.
\end{equation*}

Another construction, that we will need later, is the canonical line bundle of $M_J$, which is just defined as the top exterior power of the holomorphic tangent bundle
\begin{equation}
  \label{eq:35}
  K_J=\bigwedge\nolimits^m T'M_J.
\end{equation}
The hermitian structure on $T'M_J$ induced by $g$ again induces a hermitian structure in the canonical line bundle, which we will also just denote by $h$.

Now we want to investigate the variation of the family $J$. More precisely we will differentiate along a vectorfield $V$ on $\mathcal{T}$. This derivative is again a map into the endomorphism bundle which we denote by
\begin{equation*}
  V[J]\colon \mathcal{T} \to C^\infty(M, \End(TM)).
\end{equation*}
Differentiating the equality $J^2= -1$ and using the Leibniz rule, we get
\begin{equation*}
  JV[J] =  - V[J]J,
\end{equation*}
which shows that $V[J]_\sigma$ interchanges types on $TM_\C$, sending $i$ eigenvectors to $-i$ eigenvectors and the other way around. Thus $V[J]$ decomposes as
\begin{equation}
  \label{eq:31}
  V[J]=V[J]' + V[J]''
\end{equation}
where the two components
\begin{align*}
  &V[J]'_\sigma \in C^\infty(M, T''M^*_\sigma \otimes T'M_\sigma) \quad \text{and}\\
  &V[J]''_\sigma \in C^\infty(M, T'M^*_\sigma \otimes T''M_\sigma)
\end{align*}
are each others conjugates. Now since contraction in the first entry of $\omega$ defines an isomorphism $i_\omega \colon TM_\C \to TM_\C^*$, we can get any element in
\begin{equation*}
  C^\infty(M, \End(TM_\C)) =C^\infty (M,TM_\C\otimes TM_\C^*)
\end{equation*}
by contraction with a bivectorfield. We let $\tilde{G}(V)\colon \mathcal{T} \to C^\infty(M, TM_\C\otimes TM_\C)$ be such that the equality
\begin{equation}
  \label{eq:32}
  V[J]=\tilde{G}(V)\cdot \omega
\end{equation}
holds at each $\sigma \in \mathcal{T}$ and for each vectorfield $V$ on $\mathcal{T}$. We get another expression for $\tilde{G}(V)$ by differentiating the equality $\tilde{g}=-J\cdot\tilde{\omega}$ along $V$, namely
\begin{equation}
  \label{eq:33}
    V[\tilde{g}]=-V[J]\cdot\tilde{\omega} = -\tilde{G}(V).
\end{equation}
This is again using the Leibniz rule, and that $\tilde{\omega}$ does not depend on $\sigma$, so the derivative along any vector field $V$ vanishes. Since $\tilde{g}$ is symmetric, \eqref{eq:33} implies that $\tilde{G}(V)$ is also symmetric.

Looking at the types of $V[J]$ and $\omega$, we see that $\tilde{G}(V)$ has no $(1,1)$-part, and so we get a decomposition of $\tilde{G}(V)$ into
\begin{equation*}
  \tilde{G}(V)=G(V) + \bar{G}(V),
\end{equation*}
where
\begin{equation*}
  G(V)_\sigma \in C^\infty(M,S^2(T'M_\sigma)) \quad \text{and} \quad
  \bar{G}(V)_\sigma \in C^\infty(M,S^2(T''M_\sigma)).
\end{equation*}
Observe also that $\bar{G}(V)$ is actually the conjugate of $G(V)$, since $\tilde{G}(V)$ is real and thus its own conjugate.
We can also express the variation of the Kähler metric in terms of $\tilde{G}(V)$ by differetiating the compatibility condition $g=\omega \cdot J$, getting
\begin{align*}
  V[g]
  &=\omega \cdot V[J]\\
  &=\omega \cdot \tilde{G}(V)\cdot \omega\\
  &= g \cdot J \cdot \tilde{G}(V)\cdot g \cdot J\\
  &= - g \cdot J \cdot (G(V) + \bar{G}(V)) \cdot J \cdot g\\
  &= - i^2 g \cdot G(V) \cdot g  - ( -i)^2 g \cdot \bar{G}(V) \cdot g\\
  &=  g \cdot \tilde{G}(V) \cdot g.
\end{align*}
This also shows that $V[g]\in C^\infty(M, S^2(T'M_\sigma^*)\oplus S^2(T''M_\sigma^*))$. One more relation about variations, that we will need, is the variation of the Levi-Civita connection $\nabla^g$, here superscripted with $g$ to denote that it is the connection related to the metric $g$. We will not calculate this here, but just state the result, which is given implicitly in (\cite{Besse} Theorem 1.174) by the formula
\begin{equation*}
  2g(V[\nabla^g]_XY,Z)
  =\cv{X}(V[g](Y,Z)) + \cv{Y}(V[g])(X,Z) - \cv{Z}(V[g])(X,Y)
\end{equation*}
for vectorfields $X,Y,Z$ on $M$. To give an explicit expression, we use the above result and write it in the index notation as
\begin{equation}
  \label{eq:34}
  2V[\nabla^g]_{ab}^c
  =\cv{a}\tilde{G}(V)^{cu}g_{ub} + g_{au}\cv{b}\tilde{G}(V)^{uc}  -g_{au}\tilde{g}^{cw}\cv{w}\tilde{G}(V)^{uv}g_{vb}.
\end{equation}

\subsection{The Canonical Line Bundle of a Family}
\label{sec:canon-line-bundle}
Before we start the calculations in this section, we recall that the Ricci form is the skew-symmetric $(1,1)$-form $\rho=J\cdot r$, where $r$ is the Ricci curvature tensor, which is given by
\begin{equation*}
  r(X,Y)=\tr (Z \mapsto R(Z,X)Y) \quad \text{or in index notation} \quad r_{ab}=R_{wab}^w,
\end{equation*}
where $R$ is the curvature of the Levi-Civita connection. We also recall that the Levi-Civita connection and the curvature $R(X,Y)$ preserves types on $TM_\bC$. The form part of $R$ is $J$-invariant and is of type $(1,1)$.

Our purpose in this section is to derive an expression for the variation of the Ricci form $\rho$. This will not seem apparent from the beginning, but we will construct a certain line bundle over the product manifold $\mathcal{T}\times M$ and consider the induced connection in this bundle. The Ricci form will appear in an expression for the curvature in some directions on $\mathcal{T}\times M$ and using the Bianchi identity, we will get a very useful relation.

First we consider the vector bundle $\hat{T}'M \to \mathcal{T}\times M$, where the fibers are just the holomorphic tangent spaces of M, that is $\hat{T}'M_{(\sigma,p)}=T_p'M_\sigma$. So the point $\sigma \in \mathcal{T}$ determines the splitting of the bundle $TM\otimes{\mathbb C}$, and the the point $p\in M$ chooses the fiber $T'_pM_\sigma$ of the sub-bundle $T'M_\sigma$. We use the hat in the notation to denote, that we consider it as a bundle over the product $\mathcal{T}\times M$, and similarly we will use a hat on the differential $\hat{d}$ and connection $\hat{\nabla}$ on this bundle.

We notice that the Kähler metric induces a Hermitian structure $\hat{h}$ on $\hat{T}'M$.  We construct a connection on $\hat{T}'M$ in two steps. We first notice that along vector fields on $M=\set{\sigma}\times M$, we can use the Levi-Civita Connection on the bundle $T'M_\sigma$, which gives us a partial connection on $\hat{T}'M$ compatible with the Hermitian structure.

Now we think of a section $Z\in C^\infty(\mathcal{T}\times M,\hat{T}'M)$ as a smooth family of sections of the holomorphic tangent bundles, and we let $V$ be a vector field on $\mathcal{T}=\mathcal{T}\times \set{p}$. Since each of the holomorphic tangent spaces sits inside the larger complexified tangent bundle $T'M_\sigma \subset TM\otimes{\mathbb C}$, we can think of $Z$ as a smooth family of vector fields in this bundle. Since $TM\otimes{\mathbb C}$ is unchanged along $V$, we can take the variation of $Z$ along $V$ in $TM\otimes{\mathbb C}$, and then project the result back onto the holomorphic sub-bundle. This defines a connection along the directions on $\mathcal{T}$, that is
\begin{equation*}
  \hat{\cv{}}_VZ = \pi^{1,0}V[Z].
\end{equation*}

We still want this connection to be compatible with the Hermitian structure, so we check this by calculation. So let $V$ be a vector field on $\mathcal{T}$ and $X,Y$ sections of $\hat{T}'M$. Then we have
\begin{align*}
  V[\hat{h}(X,Y)]
  =V[g(X,\bar{Y})]
  &=V[g](X,\bar{Y}) + g(V[X],\bar{Y}) + g(X,\overline{V[Y]})\\
  &=g(V[X],\bar{Y}) + g(X,\overline{V[Y]})\\
  &=h(\hat{\nabla}_VX,Y) + h(X,\hat{\nabla}_VY),
\end{align*}
since the $(1,1)$-part of $V[g]$ vanishes as shown earlier. This is exactly the condition, that $\hat{\nabla}$ is compatible with the Hermitian structure. In this way we have constructed a connection on all of $\hat{T}'M$ compatible with the Hermitian structure.

Next we consider the top exterior power, which as in the case of $T'M_\sigma$ gives the \emph{canonical line bundle of the family} of complex structures, and we will denote this
\begin{equation}
  \label{eq:36}
  \hat{K}
  =\bigwedge\nolimits^m\hat{T}'M^*
  \to \mathcal{T}\times M.
\end{equation}
Just as for the normal canonical line bundle, we get an induced Hermitian structure and a compatible connection on $\hat{K}$.

We will now state the proposition from \cite{AGL} about the curvature of $\hat{\nabla}^K$. 
\begin{proposition}
  \label{pro:5}
  Given vector fields $X,Y$ on $M$ and $V$ on $\mathcal{T}$, the curvature of $\hat{\nabla}^K$ is given by
\begin{align}
  \label{eq:37}
  &F_{\hat{\nabla}^K}(X,Y) = i \rho(X,Y),\\
  &F_{\hat{\nabla}^K}(V,X) = \frac{i}{2} \delta \tilde{G}(V)\cdot \omega\cdot X
\end{align}
\end{proposition}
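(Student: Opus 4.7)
The plan is to compute $F_{\hat{\nabla}}$ on the rank $m$ bundle $\hat{T}'M$ first and then pass to $\hat{K}=\bigwedge^{m}\hat{T}'M^{*}$ via the identity $F_{\hat{\nabla}^{K}}=-\tr_{T'M}F_{\hat{\nabla}}$, which is standard for the determinant of the dual of a vector bundle (the contribution of $\Gamma\wedge\Gamma$ to $\tr F$ vanishes by antisymmetry of the wedge).

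For the first identity I observe that when both arguments are vector fields on $M=\{\sigma\}\times M$, the connection $\hat{\nabla}$ coincides with the Levi-Civita connection on $T'M_\sigma$, equivalently with the Chern connection of the Hermitian holomorphic bundle $(T'M_\sigma,h_\sigma)$. Its curvature is therefore the Riemannian curvature $R^{g_\sigma}(X,Y)$ viewed as an endomorphism of $T'M_\sigma$, and the standard Chern--Weil identification on a Kähler manifold gives $\tr_{T'M}R^{g}(X,Y)=-i\rho(X,Y)$; the sign from passing to the dual then yields $F_{\hat{\nabla}^{K}}(X,Y)=i\rho(X,Y)$.

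For the mixed curvature I exploit that $V$ and $X$ come from the two factors of $\mathcal{T}\times M$, so $[V,X]=0$. For $Z\in C^{\infty}(\mathcal{T}\times M,\hat{T}'M)$, using that $\nabla^{g}J=0$ makes $\pi^{1,0}$ parallel for $\nabla^{g}$, I compute
\begin{align*}
\hat{\nabla}_{V}\hat{\nabla}_{X}Z &=\pi^{1,0}V[\nabla^{g}_{X}Z]=\pi^{1,0}\bigl(V[\nabla^{g}]_{X}Z\bigr)+\pi^{1,0}\nabla^{g}_{X}V[Z], \\
\hat{\nabla}_{X}\hat{\nabla}_{V}Z &=\nabla^{g}_{X}(\pi^{1,0}V[Z])=\pi^{1,0}\nabla^{g}_{X}V[Z],
\end{align*}
the second summands of which cancel, leaving
\begin{equation*}
F_{\hat{\nabla}}(V,X)Z=\pi^{1,0}\,V[\nabla^{g}]_{X}Z
\end{equation*}
as an endomorphism of $T'M_\sigma$.

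The remaining task is to compute $-\tr_{T'M}\bigl(\pi^{1,0}V[\nabla^{g}]_{X}\bigr)$ and match it with $\tfrac{i}{2}\,\delta\tilde{G}(V)\cdot\omega\cdot X$. Writing $\pi^{1,0}=\tfrac{1}{2}(1-iJ)$, the trace splits into a real piece $\tfrac{1}{2}\tr\bigl(V[\nabla^{g}]_{X}\bigr)$ and an imaginary piece $-\tfrac{i}{2}\tr\bigl(V[\nabla^{g}]_{X}\,J\bigr)$. Plugging in the explicit formula (\ref{eq:34}), the first two summands of $V[\nabla^{g}]_{ab}^{b}$ are proportional to the trace of $\tilde{G}$ against $g$, which vanishes because $\tilde{G}=G+\bar{G}$ has no $(1,1)$-component, while the remaining two divergence terms cancel after applying $\tilde{g}\cdot g=\id$ and the symmetry of $\tilde{G}$; hence the real trace vanishes identically. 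The main and most delicate step is then to compute $\tr(V[\nabla^{g}]_{X}J)$ from (\ref{eq:34}): the first term drops because $\omega$ contracted with the symmetric bivector $\tilde{G}$ is zero, while the second and third, after using $g_{ub}J^{b}_{c}=-\omega_{uc}$ and $\tilde{g}\cdot g=\id$, combine into a single divergence of $\tilde{G}(V)$ paired with $\omega\cdot X$, yielding $\tr(V[\nabla^{g}]_{X}J)=\delta\tilde{G}(V)\cdot\omega\cdot X$. Combined with the sign from the dual, this gives the claimed formula. Throughout, no geometric input beyond $\nabla g=\nabla\omega=\nabla J=0$ and the type decomposition $\tilde{G}\in S^{2}T'M\oplus S^{2}T''M$ is used; the difficulty lies entirely in the index bookkeeping.
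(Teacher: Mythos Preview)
The paper does not supply a proof of this proposition; it states the result and cites \cite{AGL}. There is therefore no argument in the paper to compare yours against, so I evaluate your proposal on its own.

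Your strategy is sound and leads to the stated formulas. Passing from $\hat{T}'M$ to $\hat{K}$ via $F_{\hat{\nabla}^{K}}=-\tr_{T'M}F_{\hat{\nabla}}$, identifying the $M$--$M$ curvature with the Riemann tensor, and obtaining $F_{\hat{\nabla}}(V,X)=\pi^{1,0}V[\nabla^{g}]_{X}$ from $[V,X]=0$ together with $\nabla^{g}J=0$ is exactly right. Writing $\tr_{T'M}(\pi^{1,0}A|_{T'M})=\tr_{TM_{\bC}}(A\pi^{1,0})=\tfrac{1}{2}\tr A-\tfrac{i}{2}\tr(AJ)$ then correctly reduces the problem to two real traces on $TM$.

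One small inaccuracy: formula (\ref{eq:34}) has three summands, not four. Setting $c=b$, only the \emph{first} summand is $\nabla_{a}(\tilde{G}^{bu}g_{ub})$ and vanishes by type; the second and third become $g_{au}\nabla_{b}\tilde{G}^{ub}$ and $g_{au}\tilde{g}^{bw}g_{vb}\nabla_{w}\tilde{G}^{uv}=g_{au}\nabla_{v}\tilde{G}^{uv}$, and these are the two divergence terms that cancel. Your conclusion $\tr A=0$ is correct, but the description ``the first two summands are proportional to the trace of $\tilde{G}$ against $g$'' is off by one.

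For the $J$-trace, contracting (\ref{eq:34}) with $J^{b}_{c}$ indeed kills the first summand via $\tilde{G}^{cu}\omega_{uc}=0$, and the identities you cite make the second and third summands equal, so $2\tr(AJ)=2\,g_{au}J^{w}_{v}\nabla_{w}\tilde{G}^{uv}X^{a}$. The remaining identification $g_{au}J^{w}_{v}\nabla_{w}\tilde{G}^{uv}=\omega_{ua}\nabla_{w}\tilde{G}^{wu}$ is not a one-line consequence of $g\cdot J=-\omega$ alone; it requires the type decomposition $\tilde{G}=G+\bar{G}$ (so that $J$ acts as $\pm i$ on each factor) together with $g_{a''u'}=i\,\omega_{a''u'}$ and $g_{a'u''}=-i\,\omega_{a'u''}$. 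This is precisely the index bookkeeping you allude to, and once carried out it gives $\tr(AJ)=\delta\tilde{G}(V)\cdot\omega\cdot X$, completing the argument.
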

We remark that we can also get an expression for the curvature for two vector fields $V,W$ on $\mathcal{T}$, but we will not need this result here. Now applying the Bianchi identity and the results of proposition \ref{pro:5} gives us the desired result about the variation of the the Ricci form. 
\begin{proposition}
  \label{pro:7}
  The variation of the Ricci form along a vector field $V$ on $\mathcal{T}$ is given by
\begin{equation}
  \label{eq:29}
  V[\rho]= \frac{1}{2}d(\delta \tilde{G}(V)\cdot \omega).
\end{equation}
\end{proposition}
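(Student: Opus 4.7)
The plan is to apply the Bianchi identity to the curvature $F_{\hat\nabla^K}$ of the connection on the canonical line bundle $\hat K \to \mathcal{T} \times M$, using the two explicit components supplied by Proposition \ref{pro:5}. Since $\hat\nabla^K$ is a connection in a line bundle, $F_{\hat\nabla^K}$ is an ordinary $2$-form on $\mathcal{T} \times M$, and Bianchi simply reads $d F_{\hat\nabla^K} = 0$.

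First, I would pick a vector field $V$ on $\mathcal{T}$ and two vector fields $X, Y$ on $M$, lifted trivially to $\mathcal{T} \times M$ (i.e.\ constant along the factor they are not originally defined on). With these lifts, the mixed Lie brackets vanish, $[V, X] = [V, Y] = 0$, while $[X, Y]$ remains a vector field on $M$. The Cartan formula applied to the closed $2$-form $F_{\hat\nabla^K}$ then degenerates to
$$ 0 \;=\; dF_{\hat\nabla^K}(V, X, Y) \;=\; V[F_{\hat\nabla^K}(X, Y)] - X[F_{\hat\nabla^K}(V, Y)] + Y[F_{\hat\nabla^K}(V, X)] - F_{\hat\nabla^K}([X, Y], V).$$

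Next I would substitute the two formulas from Proposition \ref{pro:5}, namely $F_{\hat\nabla^K}(X, Y) = i\rho(X, Y)$ and $F_{\hat\nabla^K}(V, X) = \tfrac{i}{2}\alpha(V)(X)$, where $\alpha(V) := \delta \tilde G(V)\cdot \omega$ is a $1$-form on $M$ depending on $V$. Because our lifts make $F_{\hat\nabla^K}(X, Y)$ at $(\sigma, p)$ equal $i\rho_\sigma(X, Y)|_p$, the derivative $V[F_{\hat\nabla^K}(X, Y)]$ is literally $iV[\rho](X, Y)$. The remaining three terms assemble, via the standard Cartan formula
$$ d\alpha(V)(X, Y) = X[\alpha(V)(Y)] - Y[\alpha(V)(X)] - \alpha(V)([X, Y]),$$
into $\tfrac{i}{2}d\alpha(V)(X, Y)$. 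Equating and canceling the common factor $i$ gives exactly $V[\rho] = \tfrac{1}{2} d(\delta \tilde G(V)\cdot \omega)$, as claimed.

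I do not anticipate a genuine obstacle: the content lies entirely in Proposition \ref{pro:5}, and the remainder is a direct application of Bianchi. The only conceptual care required is to distinguish the exterior derivative $d$ appearing in the statement (taken along $M$ for each fixed $\sigma$) from the full differential on $\mathcal{T} \times M$, and to verify that with the trivial lifts the mixed Lie brackets vanish so that $F_{\hat\nabla^K}(V, W)$ — whose form was not computed — never enters the computation.
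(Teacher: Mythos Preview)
Your proposal is correct and follows essentially the same argument as the paper: both apply the Bianchi identity $dF_{\hat\nabla^K}=0$ to the triple $(V,X,Y)$ and substitute the curvature components from Proposition~\ref{pro:5}. The only cosmetic difference is that the paper assumes $[X,Y]=0$ from the outset, whereas you carry the bracket term through; note also that your three residual terms actually assemble into $-\tfrac{i}{2}\,d\alpha(V)(X,Y)$ (a sign slip in your write-up), which is precisely what yields the correct conclusion.
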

\begin{proof}
  Let $X,Y$ be commuting vector fields on $M$ and $V$ a vector field on $\mathcal{T}$. Then the Bianchi identity for $\hat{\nabla}^K$ gives
\begin{align*}
  0
  &=V[F_{\hat{\nabla}^K}(X,Y)]
    +X[F_{\hat{\nabla}^K}(Y,V)]
    +Y[F_{\hat{\nabla}^K}(V,X)]\\
  &=i V[\rho(X,Y)]
    -X\left [\frac{i}{2} \delta \tilde{G}(V)\cdot \omega\cdot Y \right]
    +Y\left [\frac{i}{2} \delta \tilde{G}(V)\cdot \omega\cdot X \right]\\
&=i V[\rho(X,Y)]
  -\frac{i}{2} d(\delta \tilde{G}(V)\cdot \omega) (X,Y),
\end{align*}
where the last equality follows by the invariant formula for the exterior derivative, since $X$ and $Y$ were chosen to commute. Now isolating $V[\rho(X,Y)]$ gives the desired equality.
\end{proof}

\subsection{Holomorphic Families of Kähler Structures}
\label{sec:holom-famil-kahl}
In this and the next section we will introduce some extra restriction on a family of complex structures. Assume that $\mathcal{T}$ is a complex manifold, and then we can require, that the the family $J$ of holomorphic structures define a holomorphic map from $\mathcal{T}$ to the space of holomorphic structures. This is defined as follows.
\begin{definition}
  \label{def:10}
  Let $\mathcal{T}$ be a complex manifold and $ J\colon \mathcal{T} \to C^\infty(M, \End(TM))$ a family of integrable almost complex structures on $M$. Then $J$ is holomorphic if
\begin{equation*}
  V'[J]=V[J]' \quad \text{and} \quad
  V''[J]=V[J]''
\end{equation*}
for any vector field $V$ on $\mathcal{T}$.
\end{definition}
Now lets denote the integrable almost complex structure on $\mathcal{T}$ by $I$. Then we get an almost complex structure $\hat{J}$ on the product manifold $\mathcal{T}\times M$ defined by 
\begin{equation*}
  \hat{J}(V\oplus X)= IX \oplus J_\sigma X, \quad \text{for} \quad
  V\oplus X \in T_{(\sigma,p)}(\mathcal{T}\times M).
\end{equation*}
It can be shown that holomorphicity of the family of complex structure as defined above is equivalent to the $\hat{J}$ being an integrable almost complex structure on $\mathcal{T}\times M$ (see \cite{AG1}). This is shown by using the criterion of the vanishing of the Nijunhuis tensor.

A useful consequence of holomorphicity is that 
\begin{equation}
  \label{eq:43}
  \tilde{G}(V')=V'[J] \cdot \tilde{\omega} = V[J]' \cdot \tilde{\omega} = G(V),
\end{equation}
and  similarly $\tilde{G}(V'')=\bar{G}(V)$.

\section{The Hitchin Connection}
\label{sec:hitchin-connection}
In this section we give the construction of the Hitchin Connection. The ideas of the proofs follow the original construction by the first author in \cite{A5}, though some things are done with a slightly different touch, which in large parts are inspired by \cite{AG1}, which is joined work by the first author and Gammelgaard.

The theorem and the setup is stated in the introduction, theorem \ref{thm:2}, and as in the rigid setting the proof is done by constructing a $u$, which satisfies the condition in the following lemma.
\begin{lemma}
  \label{lem:3}
  The connection $\HC=\cv{}^T + u$ is a Hitchin connection if and only if 
  \begin{equation}
    \label{eq:8}
    \cv{\sigma}^{0,1}u(V)s
    =\frac{i}{2} V'[J_\sigma]\cdot \cv{\sigma}^{1,0}s,
  \end{equation}
  for any holomorphic section $s\in H^{(k)}_\sigma$ and any smooth vector field $V$ on $\mathcal{T}$.
\end{lemma}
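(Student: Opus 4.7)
The plan is to reduce preservation of the subbundle $H^{(k)}\subset\hat{\mathcal{H}}^{(k)}$ directly to the defining condition of its sections. A smooth map $s\colon\mathcal{T}\to\mathcal{H}^{(k)}$ is a section of $H^{(k)}$ precisely when $\cv{\sigma}^{0,1}s(\sigma)=0$ for every $\sigma\in\mathcal{T}$. Hence $\HC$ preserves $H^{(k)}$ if and only if, for every such $s$ and every vector field $V$ on $\mathcal{T}$,
\[
\cv{\sigma}^{0,1}\bigl(V[s]+u(V)s\bigr)(\sigma)=0\qquad\text{for all }\sigma.
\]
Since $H^{(k)}$ is smooth, any $s_0\in H^{(k)}_{\sigma_0}$ extends to a local section of $H^{(k)}$ near $\sigma_0$, so both directions of the \emph{iff} amount to verifying the same pointwise-in-$\sigma$ identity for holomorphic $s$.

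The key computation is the variation of the Dolbeault splitting of the prequantum connection $\nabla$, which itself is $\sigma$-independent. Regarding $\cv{\sigma}^{0,1}s$ as an element of the $\sigma$-independent space $\Omega^1(M,\mathcal{L}^k)$, we have $\cv{\sigma}^{0,1}=\pi^{0,1}_\sigma\circ\nabla$ with $\pi^{0,1}_\sigma(\alpha)=\tfrac12(\alpha+i\,\alpha\cdot J_\sigma)$, so differentiating the projector in $\sigma$ gives $V[\pi^{0,1}_\sigma](\alpha)=\tfrac{i}{2}\,\alpha\cdot V[J_\sigma]$. Combining the Leibniz rule with $V[\nabla s]=\nabla V[s]$ yields, for any $s$,
\[
V[\cv{\sigma}^{0,1}s]\;=\;\cv{\sigma}^{0,1}V[s]\;+\;\tfrac{i}{2}\,(\nabla s)\cdot V[J_\sigma].
\]

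I then specialize to a section $s$ of $H^{(k)}$, which kills the left-hand side identically, and use two consequences of holomorphicity to simplify the right-hand side. First, holomorphicity of $s(\sigma)$ means $\nabla_Ys=0$ for $Y\in T''M_\sigma$ and $\nabla_Ys=\cv{Y}^{1,0}s$ for $Y\in T'M_\sigma$; combined with the splitting \eqref{eq:31} of $V[J_\sigma]$, only the component $V[J_\sigma]'\colon T''M_\sigma\to T'M_\sigma$ contributes to the composition, so $(\nabla s)\cdot V[J_\sigma]=V[J_\sigma]'\cdot\cv{\sigma}^{1,0}s$. Second, by Definition \ref{def:10} the family $J$ is holomorphic, giving $V[J_\sigma]'=V'[J_\sigma]$. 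Together these produce
\[
\cv{\sigma}^{0,1}V[s]\;=\;-\tfrac{i}{2}\,V'[J_\sigma]\cdot\cv{\sigma}^{1,0}s,
\]
and the preservation condition $\cv{\sigma}^{0,1}u(V)s=-\cv{\sigma}^{0,1}V[s]$ becomes exactly the identity in the lemma; the converse follows by adding the two displayed equations.

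The statement has no real obstacle: the entire content is the bookkeeping of how the Dolbeault splitting of the fixed prequantum connection varies along $\mathcal{T}$, together with the two holomorphicity hypotheses that collapse the right-hand side onto its $V'[J]$--$\cv{}^{1,0}$ piece. No ingredients beyond Section \ref{sec:famil-kahl-struct} are used.
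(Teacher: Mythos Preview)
Your argument is correct and follows the same route as the paper: write the preservation condition as $\cv{\sigma}^{0,1}u(V)s=-\cv{\sigma}^{0,1}V[s]$, then differentiate $\cv{\sigma}^{0,1}s=\tfrac12(\Id+iJ_\sigma)\cdot\nabla s=0$ along $V$ to identify $\cv{\sigma}^{0,1}V[s]$. You are in fact more explicit than the paper on two points it leaves implicit: the type reduction $(\nabla s)\cdot V[J_\sigma]=V[J_\sigma]'\cdot\cv{\sigma}^{1,0}s$ for holomorphic $s$, and the invocation of Definition~\ref{def:10} to pass from $V[J_\sigma]'$ to $V'[J_\sigma]$; you also spell out why the smoothness of $H^{(k)}$ is needed for the ``only if'' direction.
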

\begin{proof}
  By assumption we need to have
  \begin{align*}
    0&=\cv{\sigma}^{0,1}(\HC_{V}s)\\
     &=\cv{\sigma}^{0,1}V[s] + \cv{\sigma}^{0,1} u(V)s.
  \end{align*}
  Now by differentiating $\cv{\sigma}^{0,1}s=0$ along $V$ we get
  \begin{align*}
    0&=V[\cv{\sigma}^{0,1}s]= V\left[ \frac{1}{2} (Id + i J_\sigma) \cdot \cv{}s\right]\\
     &=\frac{i}{2} V[J_\sigma]\cdot \cv{\sigma}^{1,0}s  + \cv{\sigma}^{0,1}V[s]
  \end{align*}
  Combining the above expressions, we get the equation \eqref{eq:8}.
\end{proof}

The construction of the connection is carried out through a number of lemmas. We will start in the most general setting and then add the assumptions in the lemmas, when we need them. Firstly we just assume that we have a symplectic manifold $(M, \omega)$ with a prequantum line bundle $\mathcal{L}$, a family of Kähler structures $J$, and that we have an arbitrary symmetric bivector field
\begin{equation*}
  G \in C^\infty(M_\sigma , S^2(T'M_\sigma)),
\end{equation*}
and then we get a linear bundle map
\begin{equation*}
  G \colon T'M_\sigma^* \to T'M_\sigma,
\end{equation*}
given by contracting with $G$. Now using this we get an operator $\lp{G}$ on $\mathcal{H}^{(k)}$ given by
\begin{align*}
  \lp{G} \colon
  \mathcal{H}^{(k)}
  = C^\infty(M, \mathcal{L}^k)
  &\xrightarrow{\cv{\sigma}^{1,0}}  C^\infty(M, T'M_\sigma^* \otimes \mathcal{L}^k)\\
  &\xrightarrow{G\otimes \Id}  C^\infty(M, T'M_\sigma \otimes \mathcal{L}^k)\\
  &\xrightarrow{\cv{\sigma}^{1,0}\otimes \Id + \Id \otimes \cv{\sigma}^{1,0}}
  C^\infty(M, T'M_\sigma^* \otimes T'M_\sigma \otimes \mathcal{L}^k)\\
  &\xrightarrow{\tr{}}  C^\infty(M, T'M_\sigma \otimes \mathcal{L}^k).
\end{align*}
In abstract tensor notation we can write this in the short form
\begin{equation*}
  \lp{G}s=\cv{u'}G^{u'v'}\cv{v'}s,
\end{equation*}
where the outer connection is the connection in the tensor product $T'M_\sigma \otimes \mathcal{L}^k$, which is given exactly as described above by the Leibniz rule.
\begin{lemma}
  \label{lem:4}
  Let $G \in H^0(M_\sigma , S^2(T'M_\sigma))$ be any holomorphic bi-vector field on $(M, \omega_\sigma)$, then
  we have for any section $s\in H_\sigma^{(k)}$
  \begin{equation}
    \label{eq:9}
    \cv{}^{(0,1)}\lp{G}s
    = -2ik \omega \cdot G\cdot \cv{}^{(1,0)}s - i\rho \cdot G \cdot \cv{}^{(1,0)}s -ik\omega\cdot\delta(G) s.
  \end{equation}
\end{lemma}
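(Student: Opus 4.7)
The plan is to compute $\cv{}^{(0,1)}\lp{G}s$ directly in abstract index notation. Writing $\lp{G}s=\cv{u'}(G^{u'v'}\cv{v'}s)$, I would apply $\cv{\bar a}$ and split
$$\cv{\bar a}\lp{G}s=\cv{u'}\bigl(\cv{\bar a}(G^{u'v'}\cv{v'}s)\bigr)+[\cv{\bar a},\cv{u'}](G^{u'v'}\cv{v'}s),$$
then simplify each term using three ingredients: (i) $\cv{\bar a}s=0$, from holomorphicity of $s$; (ii) $\cv{\bar a}G^{u'v'}=0$, from holomorphicity of $G$; and (iii) the prequantum identity $F_{\cv{}}=-i\omega$ on $\mathcal{L}^k$.

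For the non-commutator summand, $\cv{\bar a}$ passes through $G$ by (ii) and through $\cv{v'}$ up to a curvature term; by (i) only the $\mathcal{L}^k$ part of that curvature survives, giving $\cv{\bar a}\cv{v'}s=-ik\omega_{\bar a v'}s$. Applying the outer $\cv{u'}$ and using that $\omega$ is parallel for the Levi-Civita connection produces two pieces: a term $-ik\omega_{\bar a v'}G^{u'v'}\cv{u'}s$, which after using the symmetry $G^{u'v'}=G^{v'u'}$ is one copy of $-ik\,\omega\cdot G\cdot\cv{}^{(1,0)}s$, plus a divergence piece $-ik\omega_{\bar a v'}(\cv{u'}G^{u'v'})s=-ik\,\omega\cdot\delta(G)\,s$. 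For the commutator summand, the curvature of the tensor-product connection on $T'M\otimes\mathcal{L}^k$ decomposes as the Levi-Civita curvature on $T'M$ plus $-ik\omega$ on $\mathcal{L}^k$; the Levi-Civita part, traced against the contracted $u'$ index, contributes a Ricci-tensor contraction with $G\cdot\cv{}^{(1,0)}s$, while the $\mathcal{L}^k$ part contributes a second copy of $-ik\,\omega\cdot G\cdot\cv{}^{(1,0)}s$, so the two copies combine to give the coefficient $-2ik$ in the statement.

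The final step is to rewrite the Ricci tensor contraction as a Ricci-form contraction via $\rho=J\cdot r$ and $J|_{T'M}=i\,\Id$, producing the term $-i\rho\cdot G\cdot\cv{}^{(1,0)}s$. The main obstacle I anticipate is precisely this Ricci bookkeeping: I have to verify that the specific trace $R_{\bar a u'}{}^{u'}{}_{w'}$ that emerges from $[\cv{\bar a},\cv{u'}]$ on $T'M\otimes\mathcal{L}^k$ is really the Ricci tensor $r_{\bar a w'}$ and not some other contraction, and that the $J$-twist from the Kähler structure assembles into the sign $-i\rho$ consistently with the convention used in the paper. A secondary bookkeeping task is confirming that the two $\mathcal{L}^k$-curvature contributions — one from $[\cv{\bar a},\cv{u'}]$ acting on $X^{u'}=G^{u'v'}\cv{v'}s$ and one from $[\cv{\bar a},\cv{v'}]$ acting directly on $s$ — assemble, via the symmetry of $G$, into the single coefficient $-2ik$ rather than leaving a mismatch between the two positions where the index of $\omega$ gets contracted with $G$.
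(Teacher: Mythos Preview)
Your proposal is correct and follows essentially the same approach as the paper: commute $\cv{a''}$ past the outer $\cv{u'}$, use holomorphicity of $G$ and $s$ to reduce the non-commutator piece to $\cv{u'}G^{u'v'}(-ik\omega_{a''v'}s)$, and split the commutator piece into the Riemann part (yielding the Ricci contraction) and the line-bundle part (yielding the second copy of $-ik\,\omega\cdot G\cdot\cv{}s$). The bookkeeping concerns you flag --- the Ricci-to-$\rho$ conversion via $J|_{T'M}=i\,\Id$ and the pairing of the two $-ik\omega$ terms using the symmetry of $G$ --- are exactly the points the paper works through line by line, so your plan matches theirs.
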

\begin{proof}
  The proof is a calculation that mainly uses the trick of commuting two covariant derivatives to get one term that disappears because of type considerations plus a curvature term. We will write out the proof using abstract tensor notation, which highlights contraction of terms. So for $s\in H_\sigma^{(k)}$ we get that
  \begin{align*}
    \cv{}^{(0,1)}\lp{G}s
    &=\cv{a''}\cv{u'}G^{u'v'}\cv{v'}s \\
    &=\cv{u'}\cv{a''}G^{u'v'}\cv{v'}s +[\cv{}, \cv{}]_{a''u'}G^{u'v'}\cv{v'}s\\
    &=\cv{u'}G^{u'v'}\cv{a''}\cv{v'}s +[\cv{}, \cv{}]_{a''u'}(G^{u'v'})\cv{v'}s +G^{u'v'}[\cv{}, \cv{}]_{a''u'}\cv{v'}s\\
    &=\cv{u'}G^{u'v'}[\cv{},\cv{}]_{a'' v'}s +R_{a''wu'}^wG^{u'v'}\cv{v'}s  -ik\omega_{a''u'} G^{u'v'}\cv{v'}s\\
    &= -ik\cv{u'}G^{u'v'}\omega_{a'' v'}s -R_{wa''u'}^wG^{u'v'}\cv{v'}s  -ik\omega\cdot G \cdot \cv{}s\\
    &= -ik\cv{u'}(G^{u'v'})\omega_{a'' v'}s -ikG^{u'v'}\omega_{a'' v'}\cv{u'}s -r_{a''u'}G^{u'v'}\cv{v'}s  -ik\omega\cdot G \cdot \cv{}s\\
    &= -ik\omega_{a'' v'}\delta(G)^{v'}s -ik\omega \cdot G \cdot \cv{}s -J_a^xr_{xy}J^y_{u'}G^{u'v'}\cv{v'}s -ik\omega\cdot G \cdot \cv{}s\\
    &= -2ik\omega\cdot G \cdot \cv{}s -ik\omega \cdot \delta(G)s -i\rho_{au'}G^{u'v'}\cv{v'}s \\
    &= -2ik\omega\cdot G \cdot \cv{}s -i \rho \cdot G \cdot\cv{}s -ik\omega \cdot \delta(G)s.
  \end{align*}
\end{proof}
The next step in the construction is to use the assumption on the first chern class. Applying this we get the following.
\begin{corollary}
  \label{cor:1}
  Consider the situation as in lemma \ref{lem:4} and assume that the family admits a family of Ricci potentials $F$. Then we get
  \begin{equation}
    \label{eq:10}
    \cv{}^{(0,1)}\lp{G}s
    = -i(2k+n) \omega \cdot G\cdot \cv{}^{(1,0)}s + 2 (d \delbar_\sigma F_\sigma)\cdot G \cdot \cv{}^{(1,0)}s -ik\omega\cdot\delta(G) s.
  \end{equation}
\end{corollary}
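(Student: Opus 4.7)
The plan is to combine Lemma \ref{lem:4} with the defining property of the family of Ricci potentials. Since the family $J$ admits a family of Ricci potentials $F$, and since we have the Fano-type condition $c_1(M,\omega) = n[\omega/(2\pi)]$, at each point $\sigma \in \mathcal{T}$ the function $F_\sigma$ should be characterized by the relation
$$\rho_\sigma - n\omega = 2i\, d\delbar_\sigma F_\sigma,$$
where $\rho_\sigma$ is the Ricci form of the Kähler structure $(M,\omega,J_\sigma)$. This is the content of the standard notion of a Ricci potential: the class $[\rho_\sigma - n\omega]$ vanishes because $c_1 = \tfrac{1}{2\pi}[\rho_\sigma]$ and the Fano-type condition together give $[\rho_\sigma] = n[\omega]$, and the global $\partial\delbar$-lemma on the Kähler manifold $M_\sigma$ then supplies the potential $F_\sigma$ (uniquely up to additive constants).

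With this in hand, the only step is to substitute the Ricci potential identity into the middle term $-i\rho \cdot G \cdot \cv{}^{(1,0)}s$ of Lemma \ref{lem:4}. This gives
$$-i\rho \cdot G \cdot \cv{}^{(1,0)}s = -in\,\omega \cdot G \cdot \cv{}^{(1,0)}s + 2(d\delbar_\sigma F_\sigma)\cdot G \cdot \cv{}^{(1,0)}s.$$
Inserting this back into Lemma \ref{lem:4}, the coefficient of $\omega \cdot G \cdot \cv{}^{(1,0)}s$ becomes $-2ik - in = -i(2k+n)$, while the term $-ik\,\omega\cdot\delta(G)s$ is unchanged, and the new $2(d\delbar_\sigma F_\sigma)\cdot G \cdot \cv{}^{(1,0)}s$ contribution appears. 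This produces precisely the right-hand side of \eqref{eq:10}.

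There is essentially no obstacle to this argument: the corollary is a direct algebraic consequence of Lemma \ref{lem:4} once the Ricci potential relation is expressed in the correct convention. The only conceptual care required is ensuring that the sign and factor in the defining equation for $F_\sigma$ match those implicit in the statement of \eqref{eq:10}, but these are forced by comparing the two formulas term by term.
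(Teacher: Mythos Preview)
Your proposal is correct and matches the paper's own argument essentially verbatim: the paper simply substitutes $\rho_\sigma = n\omega + 2i\,d\delbar_\sigma F_\sigma$ into Lemma~\ref{lem:4} to obtain \eqref{eq:10}. Your additional explanation of why the Ricci potential exists (via the Fano-type condition and the $\partial\delbar$-lemma) is helpful context but not part of the paper's brief proof.
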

\begin{proof}
  The proof follows directly by inserting the expression for the Ricci form given in terms of the family of Ricci potentials $F_\sigma$, i.e. $\rho_\sigma=n\omega_\sigma + 2i d \delbar_\sigma F_\sigma$, in \eqref{eq:9}.
\end{proof}
To get rid of the second term here, we can use the following proposition.
\begin{lemma}
  \label{lem:5}
  Under the same assumptions as above, we have that
  \begin{equation}
    \label{eq:11}
    \cv{}^{(0,1)}(\cv{G\cdot d F} s) = -ik\omega\cdot G \cdot dF s -(d \delbar_\sigma F_\sigma)\cdot G \cdot \cv{}^{(1,0)}s,
  \end{equation}
  and thus we get the equality
  \begin{equation}
    \label{eq:12}
    \cv{}^{(0,1)}(\lp{G}s + 2 \cv{G\cdot d F} s)
    = -i(2k+n) \omega \cdot G\cdot \cv{}^{(1,0)}s -ik\omega\cdot\delta(G)s -2 ik\omega\cdot G \cdot dF s.
  \end{equation}
\end{lemma}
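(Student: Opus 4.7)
My plan is to mimic the index-calculus proof of Lemma \ref{lem:4}: expand $\cv{}^{(0,1)}(\cv{G\cdot dF} s) = \cv{a''}(G^{u'v'}(dF)_{v'}\cv{u'} s)$ by the Leibniz rule, and then exploit holomorphicity of $G$ and of $s$, together with the Kähler property of the Levi-Civita connection, to simplify the three resulting terms.

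The first term, $(\cv{a''} G^{u'v'})(dF)_{v'}\cv{u'} s$, vanishes because $G\in H^0(M_\sigma, S^2(T'M_\sigma))$ is holomorphic. For the second term $G^{u'v'}(\cv{a''}(dF)_{v'})\cv{u'} s$ I use that on a Kähler manifold the mixed Christoffel symbols $\Gamma^{w'}_{a''v'}$ vanish, so $\cv{a''}(dF)_{v'}$ reduces to the partial derivative $\partial_{a''}\partial_{v'}F$, which is the $(v',a'')$-component of the $(1,1)$-form $\partial\bar\partial F = d\bar\partial F$. Contracting with $G^{u'v'}$ and $\cv{u'}s$ this term contributes $-(d\bar\partial F)\cdot G\cdot\cv{}^{(1,0)}s$. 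For the third term, $G^{u'v'}(dF)_{v'}\cv{a''}\cv{u'} s$, I commute the two covariant derivatives: because $s$ is holomorphic, $\cv{a''}s=0$, so only the curvature of $\mathcal{L}^k$ survives, and $F_{\cv{}^{\mathcal{L}^k}} = -ik\omega$ produces $-ik\omega_{a''u'}G^{u'v'}(dF)_{v'}\,s = -ik(\omega\cdot G\cdot dF)\,s$. Adding these three contributions gives \eqref{eq:11}.

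To obtain \eqref{eq:12} I simply add \eqref{eq:10} from Corollary \ref{cor:1} to twice \eqref{eq:11}: the $+2(d\bar\partial F)\cdot G\cdot\cv{}^{(1,0)}s$ appearing in \eqref{eq:10} cancels the $-2(d\bar\partial F)\cdot G\cdot\cv{}^{(1,0)}s$ coming from twice \eqref{eq:11}, leaving exactly the right-hand side of \eqref{eq:12}.

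The main obstacle is purely bookkeeping of types and indices — one must make sure that $\cv{a''}(dF)_{v'}$ is correctly identified as the appropriate component of $d\bar\partial F$ (this is where the Kähler hypothesis enters decisively), and that the contraction $\omega\cdot G\cdot dF$ emerges with the stated sign and type. Once these conventions are pinned down, both parts of the lemma amount to the same Leibniz-plus-commutator manoeuvre already used for Lemma \ref{lem:4}.
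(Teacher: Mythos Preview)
Your proposal is correct and follows essentially the same route as the paper's proof: Leibniz expansion (with the $\cv{a''}G$-term vanishing by holomorphicity of $G$), a commutator on $\cv{a''}\cv{u'}s$ yielding the $-ik\omega$-curvature term, and identification of $\cv{a''}(dF)_{v'}$ with the relevant component of $\bar\partial\partial F=-\partial\bar\partial F=-d\bar\partial F$. The paper phrases this last step as $(\bar\partial dF)\cdot G\cdot\cv{}s=(\bar\partial\partial F)\cdot G\cdot\cv{}s=-(\partial\bar\partial F)\cdot G\cdot\cv{}s$ rather than invoking the vanishing of mixed Christoffel symbols, but the content is identical.
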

\begin{proof}
  Again the proof is a calculation. For $s\in H_\sigma^{(k)}$, we have that
  \begin{align*}
    \cv{}^{(0,1)}(\cv{G\cdot d F} s)
    &=\cv{a''}G^{u'v'}dF_{v'}\cv{u'}s\\
    &=G^{u'v'}dF_{v'}\cv{a''}\cv{u'}s + G^{u'v'} \cv{a''}(dF_{v'})\cv{u'}s\\
    &=G^{u'v'}dF_{v'}[\cv{}, \cv{}]_{a''u'}s + \cv{a''}(dF_{u'}) G^{u'v'} \cv{v'}s\\
    &= -ik\omega_{a''u'} G^{u'v'}dF_{v'}s + (\delbar dF)\cdot G \cdot \cv{}s\\
    &= -ik\omega \cdot G \cdot dF s + (\delbar \del F)\cdot G \cdot \cv{}s\\
    &= -ik\omega \cdot G \cdot dF s - (\del \delbar F)\cdot G \cdot \cv{}s.
  \end{align*}
  Now \eqref{eq:12} follows by combining equations \eqref{eq:10} and \eqref{eq:11}.
\end{proof}

\subsection{The Weakly Restricted Case}
\label{sec:weakly-rigid-case}
Now we impose the weakly restricted condition and continue the construction, in the case where are family of complex structures is holomorphic. The following lemma and proposition are the key components.
\begin{lemma}
  \label{lem:1}
   Consider the situation as in corollary \ref{cor:1}, and assume that the family of holomorphic structures $J_\sigma$ is weakly restricted. Furthermore assume that the family admits a family of Ricci potentials $F$.

   Let $G_{\beta}(V)$ and $\beta(V)$ be the bivector- and vector fields associated to the the family $J_\sigma$. Then we have that the $1$-form
\begin{equation}
  \label{eq:14}
  \Omega(V)
    =-\delta(G_{\beta}(V))\cdot \omega +\delta(\delbar\beta(V)) - 2dF \cdot G_{\beta}(V)\cdot \omega + 2\delbar \beta(V) \cdot dF+ 4i\delbar V'[F]
\end{equation}
is closed and of type $(0,1)$.
\end{lemma}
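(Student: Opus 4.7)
First, I would verify term-by-term that $\Omega(V)$ is of type $(0,1)$. On a Kähler manifold the Levi--Civita connection preserves types, so $\delta G_\beta(V)$ is a $(1,0)$-vector field and $\delta(G_\beta(V))\cdot\omega$ contracts it against the $(1,1)$-form $\omega$ to produce a $(0,1)$-form. Since $\delbar\beta(V)\in\Omega^{0,1}(M,T'M)$, the divergence $\delta(\delbar\beta(V))$ is again a $(0,1)$-form. By the weakly restricted relation (\ref{Gb}), $G_\beta(V)\cdot\omega=V'[J]+\delbar\beta(V)$ lies in $\Omega^{0,1}(M,T'M)$, so contracting its $T'M$-factor with $dF$ yields a $(0,1)$-form, and similarly for $\delbar\beta(V)\cdot dF$. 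Finally $\delbar V'[F]$ is manifestly $(0,1)$.

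Next, I would rewrite $\Omega(V)$ purely in terms of $G(V)$, eliminating the $\beta$-dependent terms. From (\ref{Gb}) combined with identity (\ref{eq:43}) for the holomorphic family, one has $G_\beta(V)-G(V)=\delbar\beta(V)\cdot\tilde\omega$. The inverse symplectic form $\tilde\omega$ is covariantly constant on a Kähler manifold, so the divergence passes through it, yielding
\begin{equation*}
\delta(G_\beta(V))\cdot\omega \;=\; \delta(G(V))\cdot\omega + \delta(\delbar\beta(V)).
\end{equation*}
Likewise $dF\cdot G_\beta(V)\cdot\omega = dF\cdot V'[J] + \delbar\beta(V)\cdot dF$. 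Substituting both into $\Omega(V)$, the $\beta$-dependent contributions cancel and
\begin{equation*}
\Omega(V) \;=\; -\delta(G(V))\cdot\omega - 2\,dF\cdot G(V)\cdot\omega + 4i\,\delbar V'[F].
\end{equation*}

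For closedness I would combine Proposition \ref{pro:7} with the Ricci potential identity. Since $\tilde G(V')=G(V)$, Proposition \ref{pro:7} specialises to $V'[\rho]=\tfrac12 d(\delta G(V)\cdot\omega)$. Differentiating $\rho = n\omega + 2i\,d\delbar F$ along $V'$ (using that $\omega$ and $d$ are $\sigma$-independent) gives $V'[\rho] = 2i\,d(V'[\delbar]F + \delbar V'[F])$. Equating the two expressions yields
\begin{equation*}
d\bigl(\delta G(V)\cdot\omega - 4i\,V'[\delbar]F - 4i\,\delbar V'[F]\bigr) = 0.
\end{equation*}
The final ingredient is the algebraic identity $V'[\delbar]F = \tfrac{i}{2}\,dF\cdot V'[J] = \tfrac{i}{2}\,dF\cdot G(V)\cdot\omega$, obtained by differentiating the type-projector $\pi^{0,1}=\tfrac12(I+iJ^*)$ along $V'$ and using that $V'[J]=V[J]'$ maps $T''M$ to $T'M$. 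With this substitution the bracketed closed expression above coincides precisely with $-\Omega(V)$ in its simplified form, so $d\Omega(V)=0$.

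The main subtlety is the derivation of $V'[\delbar]F=\tfrac{i}{2}dF\cdot V'[J]$ from the variation of the $(0,1)$-projector, together with the bookkeeping in the second paragraph showing that the $\beta$-dependent pieces collapse; both are purely algebraic consequences of (\ref{Gb}) and (\ref{eq:43}), but the contraction orderings and factors of $i$ demand care to arrange in exactly the right combination.
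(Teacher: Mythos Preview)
Your proof is correct and follows essentially the same route as the paper: differentiate the Ricci-potential identity $\rho=n\omega+2i\,d\delbar F$ along $V'$, compare with Proposition~\ref{pro:7}, and compute $V'[\delbar]F=\tfrac{i}{2}\,dF\cdot V'[J]$. The only organisational difference is that you first reduce $\Omega(V)$ to the $\beta$-independent form
\[
-\delta(G(V))\cdot\omega \;-\; 2\,dF\cdot G(V)\cdot\omega \;+\; 4i\,\delbar V'[F]
\]
and then prove closedness of that, whereas the paper substitutes $V'[J]=G_\beta(V)\cdot\omega-\delbar\beta(V)$ directly into both the variation of $\rho$ and the expression from Proposition~\ref{pro:7} and checks that the $\beta$-terms match. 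Your version has the small bonus of making explicit that $\Omega(V)$ does not depend on the particular choice of $\beta$ solving the weakly restricted condition, but the underlying computation is identical.
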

\begin{proof}
  We start with the equation $\rho_\sigma = n\omega + 2id \delbar_\sigma F_\sigma$. Differentiating this equation along $V'$ we get
  \begin{align*}
    V'[\rho] 
    &= 2id V'[\delbar] F + 2id \delbar V'[F]\\
    &= - d V'[J] \cdot dF + d (2i\delbar V'[F])\\
    &= - d (G_{\beta}(V)\cdot \omega) \cdot dF + d\delbar \beta(V) \cdot dF + d (2i\delbar V'[F])\\
    &= - d (dF \cdot G_{\beta}(V)\cdot \omega) + d(\delbar \beta(V) \cdot dF) + d (2i\delbar V'[F]).
  \end{align*}
  Now we use proposition \ref{pro:7}, which gives us
  \begin{equation*}
    2 V'[\rho] 
    =d\delta(G(V)\cdot \omega)
    = d\delta(V'[J])
    = d\delta(G_{\beta}(V))\cdot \omega -d\delta(\delbar\beta(V)).
  \end{equation*}
  Inserting this on the left hand side of the above equation, we get
  \begin{eqnarray*}
    0 & = & -d(\delta(G_{\beta}(V))\cdot \omega) +d\delta(\delbar\beta(V))
    - d (2dF \cdot G_{\beta}(V)\cdot \omega)\\
    & & + d(2\delbar \beta(V) \cdot dF) + d (4i\delbar V'[F]) = d\Omega(V),
  \end{eqnarray*}
which exactly states that $\Omega(V)$ is closed. By checking each term, it is also seen directly to be of type $(0,1)$.
\end{proof}

\begin{proposition}
  \label{pro:1}
  Consider the setup of lemma \ref{lem:1}, and furthermore assume that $H^1(M, \bR)=0$. Then there exists $\psi(V)\in C^\infty(M)$, such that
  \begin{align*}
    &\delta(G_{\beta}(V)) \cdot \omega + 2 dF \cdot G_{\beta}(V) \cdot \omega\\
    &=4i \delbar V'[F] +2\delbar  (d F \cdot \beta(V))
    + in\omega \cdot \beta(V) + \delbar\delta(\beta(V)) -\delbar\psi(V).
  \end{align*}
  If non of the complex structures admit non-constant holomorphic functions on $M$, which is true for instance if $M$ is compact, we get that $\psi(V)=0$.
\end{proposition}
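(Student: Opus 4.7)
The plan is to exploit Lemma \ref{lem:1} together with the topological hypothesis to produce the function $\psi(V)$, and then to rearrange the resulting equation $\delbar\psi(V) = \Omega(V)$ into the form claimed. The hard part will be a purely pointwise identity on the Kähler manifold $M_\sigma$ relating $\delta(\delbar\beta)$, $\delbar\delta(\beta)$, $\delbar(dF\cdot\beta)$, and $\omega\cdot\beta$.

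First I would invoke Lemma \ref{lem:1}, which guarantees that $\Omega(V)$ is a closed, complex-valued $1$-form of type $(0,1)$ on $M$. Since $H^1(M,\bR)=0$, splitting $\Omega(V)$ into its real and imaginary parts and applying the Poincaré lemma to each yields a complex-valued function $\psi(V)\in C^\infty(M)$ with $d\psi(V) = \Omega(V)$. The type condition on $\Omega(V)$ then forces the vanishing of the $(1,0)$-part of $d\psi(V)$, i.e.\ $\del\psi(V)=0$, and consequently $\delbar\psi(V)=\Omega(V)$.

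Substituting the explicit expression for $\Omega(V)$ from Lemma \ref{lem:1} into $\delbar\psi(V)=\Omega(V)$ and isolating $\delta(G_\beta)\cdot\omega + 2dF\cdot G_\beta\cdot\omega$ on the left-hand side, the claim of the proposition becomes equivalent to the auxiliary identity
$$\delta(\delbar\beta) + 2\,\delbar\beta\cdot dF = 2\,\delbar(dF\cdot\beta) + in\,\omega\cdot\beta + \delbar\delta(\beta).$$
This is where the real work sits. I would obtain it from three inputs: the Leibniz rule for $\delbar$ applied to the scalar $dF\cdot\beta$, which produces a Hessian term $(\del\delbar F)\cdot\beta$ beyond $\delbar\beta\cdot dF$; the Kähler commutator identity of the form $\delta(\delbar\beta)-\delbar\delta(\beta) = \pm i\,\rho\cdot\beta$, obtained by exchanging $\cv{a'}$ and $\cv{\bar b}$ on the $(1,0)$-vector field $\beta^{a'}$ and tracing into a Ricci contraction; and the Ricci potential equation $\rho = n\omega + 2i\,\del\delbar F$, which converts the Ricci-form term into the desired $n\omega$ and $\del\delbar F$ contributions. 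A bookkeeping of the signs and of the factor $2$ in front of $\delbar\beta\cdot dF$ then matches the two sides.

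For the final assertion, under the hypothesis that $(M,J_\sigma)$ admits no non-constant holomorphic functions, the relation $\del\psi(V)=0$ established above forces $\overline{\psi(V)}$ to be holomorphic on $M_\sigma$ and hence constant. Therefore $\psi(V)$ is itself constant, so $\delbar\psi(V)=0$ and $\Omega(V)=0$; in particular we may take $\psi(V)=0$ without violating the equation.
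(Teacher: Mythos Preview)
Your proposal is correct and follows essentially the same route as the paper: both use Lemma~\ref{lem:1} together with $H^1(M,\bR)=0$ to produce $\psi(V)$ with $d\psi(V)=\Omega(V)$, observe that the $(0,1)$-type of $\Omega(V)$ forces $\del\psi(V)=0$ (hence $\psi(V)$ is constant when there are no non-constant holomorphic functions), and then reduce the statement to the pointwise identity you display, proving the latter from the Ricci-commutator formula $\delta(\delbar\beta)-\delbar\delta(\beta)=i\rho\cdot\beta$, the Ricci-potential relation $\rho=n\omega+2i\,\del\delbar F$, and the Leibniz expansion of $\delbar(dF\cdot\beta)$. The only difference is that the paper carries out the commutator calculation in index notation and fixes the sign explicitly as $+i\rho\cdot\beta$, whereas you leave the sign (and the Hessian term, which is $\delbar\del F\cdot\beta$ rather than $\del\delbar F\cdot\beta$) to a final bookkeeping step; this is harmless, but in a written version you should pin those down rather than defer them.
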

\begin{proof}
  We know that $\Omega(V)$ is closed and of type $(0,1)$, and since we have assumed $H^1(M, \bR)=0$, it is exact. Thus there exist a function $\psi(V) \in C^\infty(M)$, such that $\Omega(V)=\delbar\psi(V)$. 

  Observe that if non of the complex structures admit non-constant holomorphic functions on $M$, we get that $\psi(V)=0$, since the equation $d\psi(V)=\delbar\psi(V)$ shows that it is anti-holomorphic.

Combining expressions we get
  \begin{equation*}
    \delta(G_{\beta}(V))\cdot \omega + 2dF \cdot G_{\beta}(V)\cdot \omega
    =4i\delbar V'[F] + 2 dF \cdot \delbar \beta(V)+\delta(\delbar\beta(V)) -\delbar\psi(V).
  \end{equation*}
  Now we only need to rewrite the term $\delta(\delbar\beta(V))$, and this is again done by an application of commuting covariant derivatives, which goes as follows
  \begin{align*}
    \delta(\delbar\beta(V))
    &=\cv{u'}\cv{a''}\beta(V)^{u'}\\
    &=[\cv{},\cv{}]_{u' a''}\beta(V)^{u'} + \cv{a''}\cv{u'}\beta(V)^{u'}\\
    &=R_{u' a''v'}^{u'}\beta(V)^{v'} +\delbar \delta(\beta(V))\\
    &=r_{a''v'}\beta(V)^{v'} +\delbar \delta(\beta(V))\\
    &=J_{a''}^{u}r_{uw}J^{w}_{v'}\beta(V)^{v'} +\delbar \delta(\beta(V))\\
    &=i \rho_{a''v'}\beta(V)^{v'} +\delbar \delta(\beta(V))\\
    &=i \rho \cdot \beta(V) +\delbar \delta(\beta(V))\\
    &=i n\omega \cdot \beta(V) - 2\del\delbar F \cdot \beta(V)+\delbar \delta(\beta(V)\\
    &=i n\omega \cdot \beta(V) + 2\delbar d F \cdot \beta(V)+\delbar \delta(\beta(V).
  \end{align*}
  Inserting this above and rewriting we obtain that
  \begin{equation*}
    \delbar dF\cdot \beta(V) + d F \cdot \delbar\beta(V))
    =\delbar (d F \cdot \beta(V)),
  \end{equation*}
which  completes the proof.
\end{proof}
We can now insert this in the expression from lemma \ref{lem:5} and get
\begin{align*}
  \cv{}^{(0,1)}&(\lp{G_{\beta}(V)}s + 2 \cv{G_{\beta}(V)\cdot dF} s)\\
                &=-i(2k+n) \omega \cdot G_{\beta}(V)\cdot \cv{}^{(1,0)}s
                  -ik(\omega\cdot\delta(G_{\beta}(V))s +2 \omega\cdot G_{\beta}(V) \cdot dF s)\\
                &= i(2k+n)  (G_{\beta}(V)\cdot\omega) \cdot \cv{}^{(1,0)}s 
                  +ik(\delta(G_{\beta}(V))\cdot \omega +2 dF \cdot G_{\beta}(V) \cdot \omega)s\\
                &=  i(2k+n) (V'[J] + \delbar \beta(V)) \cdot \cv{}^{(1,0)}s \\
                &+ik(4i \delbar V'[F] +2\delbar  (dF \cdot \beta(V))
                  + in\omega \cdot \beta(V) + \delbar\delta(\beta(V)) -\delbar\psi(V))s\\
                &= 2(2k+n) \frac{i}{2} V'[J] \cdot \cv{}^{(1,0)}s  + i(2k+n)\delbar \beta(V) \cdot \cv{}^{(1,0)}s\\
                &-4k \delbar V'[F]s +2ik\delbar  (dF \cdot \beta(V))s
                  -kn\omega \cdot \beta(V)s + ik\delbar\delta(\beta(V))s -ik\delbar\psi(V)s.
\end{align*}
Now we only need one last lemma to get rid of the last first-order term on the right side.
\begin{lemma}
  \label{lem:7}
  For any family of vector fields $\beta(V) \in C^\infty(M_\sigma , T'M_\sigma)$ and a holomorphic section $s\in H_\sigma^k$, we have that
\begin{equation}
  \label{eq:7}
  \cv{}^{(0,1)} \cv{\beta(V)} s
  =\delbar \beta(V)\cdot \cv{}^{(1,0)}s -ik\omega\cdot \beta(V) s
\end{equation}
\end{lemma}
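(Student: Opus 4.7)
The plan is to carry out a direct abstract-index calculation in the spirit of Lemmas \ref{lem:4} and \ref{lem:5}: expand $\cv{}^{(0,1)}\cv{\beta(V)}s$ by Leibniz, and then exploit the fact that $s$ is holomorphic together with the prequantum curvature identity $F_{\cv{}}=-i\omega$ (hence $-ik\omega$ in $\mathcal{L}^k$) to reduce a commutator of two covariant derivatives to the desired pointwise term.

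More precisely, I would write $\cv{\beta(V)}s = \beta(V)^{u'}\cv{u'}s$ and compute
\begin{equation*}
  \cv{a''}\bigl(\beta(V)^{u'}\cv{u'}s\bigr)
  = \bigl(\cv{a''}\beta(V)^{u'}\bigr)\cv{u'}s + \beta(V)^{u'}\cv{a''}\cv{u'}s.
\end{equation*}
The first summand is exactly $\delbar\beta(V)\cdot \cv{}^{(1,0)}s$, since the $(0,1)$-derivative of a $(1,0)$-vector field encodes $\delbar\beta(V)$, and its contraction with $\cv{u'}s=\cv{}^{(1,0)}s$ gives the stated pairing.

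For the second summand, I would commute the covariant derivatives,
\begin{equation*}
  \cv{a''}\cv{u'}s = \cv{u'}\cv{a''}s + [\cv{a''},\cv{u'}]s,
\end{equation*}
and note that $\cv{a''}s = \cv{}^{(0,1)}s = 0$ because $s\in H_\sigma^{(k)}$ is holomorphic, so $\cv{u'}\cv{a''}s=0$. The commutator is purely the curvature of the induced connection on $\mathcal{L}^k$, namely $[\cv{a''},\cv{u'}]s = -ik\,\omega_{a''u'}\,s$. Contracting with $\beta(V)^{u'}$ then yields $-ik\,\omega\cdot\beta(V)\,s$, and summing the two contributions gives \eqref{eq:7}.

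The calculation is essentially the same commutator-of-derivatives trick used in Lemma \ref{lem:4}, only simpler because no Ricci/curvature-of-$TM$ term appears: the only index being differentiated on $s$ is the $\mathcal{L}^k$ index, so no $R$-terms from the Levi-Civita connection enter. I therefore do not expect any genuine obstacle; the only subtlety is to be careful that the $(0,1)$-part of $\cv\beta(V)$ really is $\delbar\beta(V)$ (which follows because $\beta(V)\in C^\infty(M,T'M_\sigma)$, so $\cv{}^{(0,1)}\beta(V)$ coincides with $\delbar\beta(V)$ on the holomorphic tangent bundle with its Chern connection).
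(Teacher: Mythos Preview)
Your proposal is correct and matches the paper's own proof essentially line for line: Leibniz expansion of $\cv{a''}(\beta(V)^{u'}\cv{u'}s)$, followed by replacing $\cv{a''}\cv{u'}s$ with the commutator $[\cv{},\cv{}]_{a''u'}s=-ik\omega_{a''u'}s$ using holomorphicity of $s$. The paper's write-up is slightly more compressed (it skips the explicit $\cv{u'}\cv{a''}s=0$ step), but the argument is identical.
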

\begin{proof}
  The result follows directly by the following calculation
\begin{align*}
  \cv{}^{(0,1)} \cv{\beta(V)} s
  &=\delbar \beta(V) \cdot \cv{}^{(1,0)}s + \beta(V)^{u'}\cv{a''}\cv{u'}s \\
  &=\delbar \beta(V) \cdot \cv{}^{(1,0)}s + \beta(V)^{u'}[\cv{},\cv{}]_{a'' u'} s \\
  &=\delbar \beta(V) \cdot \cv{}^{(1,0)}s - \beta(V)^{u'} ik\omega_{a'' u'} s \\
  &=\delbar \beta(V)\cdot \cv{}^{(1,0)}s -ik\omega\cdot \beta(V) s.
\end{align*}
\end{proof}
Now using this lemma we get that
\begin{align*}
  \cv{}^{(0,1)}&(\lp{G_{\beta}(V)}s + 2 \cv{G_{\beta}(V)\cdot dF}s-i(2k+n) \cv{\beta(V)}s )\\
               &= 2(2k+n) \frac{i}{2} V'[J] \cdot \cv{}^{(1,0)}s -4k \delbar V'[F]s\\
               &+2ik\delbar  (dF \cdot \beta(V))s - 2k(k+n) \omega \cdot \beta(V)s + ik\delbar\delta(\beta(V))s -ik\delbar\psi(V)s,
\end{align*}
and now by moving all the 0'th order terms to the left side, we our result. Here $\phi(V) \in C^\infty(M)$ is a smooth function, such that $\delbar \phi(V)=\omega\cdot \beta(V)$, and thus we get that
\begin{align*}
  \cv{}^{(0,1)}&(\lp{G_{\beta}(V)}s + 2 \cv{G_{\beta}(V)\cdot dF}s-i(2k+n) \cv{\beta(V)}s \\
               &+4k V'[F]s -2ik dF\cdot \beta(V)s -ik\delta(\beta(V))s +2k(k+n)\phi(V)s +ik\psi(V)s)\\
               &= 2(2k+n) \frac{i}{2} V'[J] \cdot \cv{}^{(1,0)}s.
\end{align*}
Now we can complete the proof of theorem \ref{thm:2}, since we see that we get a Hitchin connection, by setting
\begin{align*}
  u(V)&=\frac{1}{2(2k+n)}(\lp{G_{\beta}(V)} + 2 \cv{G_{\beta}(V)\cdot dF}-i(2k+n) \cv{\beta(V)} \\
      &+4k V'[F] -2ik dF\cdot \beta(V) -ik\delta(\beta(V)) +2k(k+n)\phi(V) +ik\psi(V)).
\end{align*}

\subsection{Hitchin connection for smooth families of complex structures}
\label{sec:hitch-conn-smooth}
We have in the above constructions of the Hitchin connection assumed that the family of complex structures was holomorphic. We can however go through the construction without assuming that ${\mathcal T}$ is a complex manifold. We have not used holomophicity of the family before assuming rigidity/weakly restricted, so instead of differentiation along $V'$ in lemma 4, we instead differentiate along $V$.

Doing this we get that the form
\begin{align*}
      &-\delta(G_{\beta}(V))\cdot \omega - 2dF \cdot G_{\beta}(V)\cdot \omega +\delta(\delbar\beta(V)) + 2\delbar \beta(V) \cdot dF+ 4i\delbar V[F]\\
      &-\delta(\bar{G}(V))\cdot \omega - 2dF \cdot \bar{G}(V)\cdot \omega
\end{align*}
is closed and hence exact. It is however no longer of type $(0,1)$, but it splits into a $(1,0)$ and a $(0,1)$ part, which come as $\del$ and $\delbar$ of a function $\tilde{\psi}(V) \in C^\infty(M)$. Both of the new terms are of type $(1,0)$, so we get similarly as above
\begin{equation*}
  \delbar\tilde{\psi}(V)=
  -\delta(G_{\beta}(V))\cdot \omega +\delta(\delbar\beta(V))- 2dF \cdot G_{\beta}(V)\cdot \omega + 2\delbar \beta(V) \cdot dF+ 4i\delbar V[F].
\end{equation*}
Arguing as in the proof of proposition \ref{pro:1}, we get that
\begin{align*}
    & \delta(G_{\beta}(V)) \cdot \omega + 2 dF \cdot G_{\beta}(V) \cdot \omega\\
    &=4i \delbar V[F] +2\delbar  (d F \cdot \beta(V)) + in\omega \cdot \beta(V) + \delbar\delta(\beta(V)) -\delbar\psi(V).
  \end{align*}
Now going through the construction of the Hitchin Connection as above, still assuming weakly restricted but without holomorphicity of the family of Kähler structures, we get the following theorem.

\begin{theorem}[Hitchin Connection for smooth $\mathcal{T}$]
  \label{thm:3}
Consider the same setup as in theorem \ref{thm:2}, except the manifold $\mathcal{T}$ is only assumed to be smooth and the assumption of holomorphicity of the family $J$ is dropped.
  Then there exists a Hitchin connection $\HC$ in the bundle $\hat{H}^{(k)}$ over $\mathcal{T}$, given by the expression
 $$
    \HC_V= \cv{V}^T + u(V), 
    $$
where
\begin{align}
      u(V)&=\frac{1}{2(2k+n)}(\lp{G_{\beta}(V)} + 2 \cv{G_{\beta}(V)\cdot dF}-i(2k+n) \cv{\beta(V)} \nonumber\\
      &+4k V[F] -2ik dF\cdot \beta(V) -ik\delta(\beta(V)) +2k(k+n)\phi(V) + ik\tilde{\psi}(V)),
  \end{align}
where $\phi(V)$ is defined as in Theorem \ref{thm:2} and $\tilde{\psi}(V) \in C^\infty(M)$ satisfies
$$ \delbar \tilde{\psi}(V) = \tilde{\Omega}(V),$$
and $\tilde{\Omega}(V) \in \Omega^1(M)$ is the closed and hence exact $1$-form
\begin{equation*}
  \tilde{\Omega}(V)
  =-\delta(G_{\beta}(V))\cdot \omega +\delta(\delbar\beta(V))- 2dF \cdot G_{\beta}(V)\cdot \omega + 2\delbar \beta(V) \cdot dF+ 4i\delbar V[F].
\end{equation*}
\end{theorem}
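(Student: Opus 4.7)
The plan is to mirror the proof of Theorem \ref{thm:2} and locate precisely where the holomorphicity assumption was used, replacing those steps by their natural analogues. First I would note that Lemma \ref{lem:3} is purely pointwise in $\sigma$ and continues to characterise a Hitchin connection as one satisfying $\cv{\sigma}^{0,1}u(V)s=\tfrac{i}{2}V'[J_\sigma]\cdot\cv{\sigma}^{1,0}s$; the primed derivative on the right still makes sense, but it is no longer equal to $V[J]'$. Next, Lemmas \ref{lem:4}, \ref{lem:5}, \ref{lem:7} and Corollary \ref{cor:1} are statements at a fixed $\sigma$ about a symmetric holomorphic bi-vector $G$ and a holomorphic vector field $\beta$, and their proofs are pure commutator/curvature calculations that never invoke variation of $J$; they therefore carry over verbatim with $G=G_\beta(V)$ and $\beta=\beta(V)$.

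The single place where holomorphicity entered was Lemma \ref{lem:1}. There the relation $\rho_\sigma=n\omega+2id\delbar_\sigma F_\sigma$ was differentiated along $V'$ and $V'[J]=G(V)\cdot\omega$ was used, yielding an identity of pure type $(0,1)$. Without holomorphicity one must differentiate along the full $V$. Using $V[\delbar]=-\tfrac12 V[J]\cdot d$ together with the identity $V[J]=\tilde G(V)\cdot\omega=(G(V)+\bar G(V))\cdot\omega$ and the weakly restricted equation $G_\beta(V)\cdot\omega=V'[J]+\delbar\beta(V)$ (together with its complex conjugate for the $(1,0)$-to-$(1,0)$ part), and then invoking Proposition \ref{pro:7} applied to $\tilde G(V)$ rather than $G(V)$, one obtains that the $1$-form
\begin{equation*}
-\delta(G_\beta(V))\cdot\omega+\delta(\delbar\beta(V))-2dF\cdot G_\beta(V)\cdot\omega+2\delbar\beta(V)\cdot dF+4i\delbar V[F]-\delta(\bar G(V))\cdot\omega-2dF\cdot\bar G(V)\cdot\omega
\end{equation*}
is closed. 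Since $b_1(M)=0$ it is exact; although it is no longer of pure type $(0,1)$, the two additional terms on the right are manifestly of type $(1,0)$, so decomposing the primitive into types yields a smooth function $\tilde\psi(V)$ with $\delbar\tilde\psi(V)=\tilde\Omega(V)$ exactly as displayed in the theorem.

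With this $\tilde\psi(V)$ in hand, the argument of Proposition \ref{pro:1} (which itself uses only the commutator identity and the Ricci potential equation) reproduces the same cleaning-up identity as before, with $V[F]$ in place of $V'[F]$ and $\tilde\psi$ in place of $\psi$. Feeding this into the chain of identities culminating in Section \ref{sec:weakly-rigid-case} yields exactly the stated $u(V)$, and Lemma \ref{lem:3} then certifies that $\HC=\cv{}^T+u$ is a Hitchin connection. The main obstacle, and essentially the only substantive new work, is the careful type-bookkeeping in the displayed step above: one must verify that the $\bar G(V)$-contributions arising from $V$-differentiation (rather than $V'$-differentiation) of the Ricci potential equation land entirely in the $(1,0)$ part and therefore do not affect the equation $\delbar\tilde\psi(V)=\tilde\Omega(V)$; once this is in place every remaining manipulation is purely symbolic and identical to the holomorphic case.
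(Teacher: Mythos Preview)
Your proposal is correct and follows essentially the same route as the paper: the paper likewise observes that holomorphicity enters only through the $V'$-differentiation in Lemma~\ref{lem:1}, replaces it by differentiation along the full $V$, obtains the same closed $1$-form with the two extra $(1,0)$ terms coming from $\bar G(V)$, and then splits the resulting primitive by type to produce $\tilde\psi(V)$ before rerunning the argument of Proposition~\ref{pro:1} and Section~\ref{sec:weakly-rigid-case}. One small correction to your first paragraph: you have the roles of $V'[J]$ and $V[J]'$ reversed---when $\mathcal{T}$ is only smooth there is no splitting $V=V'+V''$ on $\mathcal{T}$, so it is $V'[J]$ that ceases to make sense, while $V[J]'$ (the $T''M_\sigma^*\otimes T'M_\sigma$-component of $V[J]$, determined pointwise by $J_\sigma$) is what survives and is what actually appears on the right-hand side of Lemma~\ref{lem:3} and in the weakly-restricted equation.
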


\section{The no-go theorem and projective flatness}
\label{sec:no-go-theorem}
In this section we briefly recall the work \cite{GM}, in which Ginzburg and Montgomery shows a no-go theorem, stating conditions under which no natural projectively flat connection can exist on the vector bundle of quantizations. It turns out that the Hitchin connection constructed in this paper fullfils the conditions, and thus it cannot be projectively flat in general.

To state the theorem we need to fix notation. We let ${\mathcal H}$ be the group of hamiltonian symplectomorphisms of $M$, and ${\mathcal G}$ be the group of diffeomorphism of the unit circle bundle $U$ of $\mathcal{L}$ which preserve the connection form. Lastly we let ${\mathcal G}_0$ be the identity connected component of ${\mathcal G}$, that is the the elements isotopic to the $\Id$ in ${\mathcal G}$. Let $J_0\in {\mathcal C}_\omega(M)$ and let ${\mathcal C}^0_\omega(M)$ be a small enough neighbourhood of $J_0$, such that $\mathcal{H}^{(k)}\mid_{{\mathcal C}^0_\omega(M)}$ is a vector bundle over ${\mathcal C}^0_\omega(M)$.

\begin{theorem}[Ginzburg and Montgomery]
  \label{thm:1}
  Assume that there exist a complex structure $J_0$ with stabilizer $G_{J_0}$ in ${\mathcal H}$ of positive dimension, and that the infinitesimal representation of $G_{J_0}$ on $H^{(k)}_{J_0}$ is non-trivial. Then there is no projectively flat connection on $\mathcal{H}^{(k)}\mid_{{\mathcal C}^0_\omega(M)}$, which is invariant under the ${\mathcal G}_0$ local action.
\end{theorem}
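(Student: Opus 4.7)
The plan is to argue by contradiction. Suppose $\nabla$ is a projectively flat connection on $\mathcal{H}^{(k)}|_{\mathcal{C}^0_\omega(M)}$ invariant under the local $\mathcal{G}_0$-action; the goal is to show that the induced infinitesimal representation of the stabilizer $G_{J_0}$ on $H^{(k)}_{J_0}$ is necessarily trivial in the projective sense, contradicting the hypothesis.

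First I would trivialize the projective bundle: projective flatness of $\nabla$ amounts to genuine flatness of the induced connection $\overline{\nabla}$ on $P\mathcal{H}^{(k)}$, and since $\mathcal{C}^0_\omega(M)$ can be shrunk to a simply connected neighbourhood of $J_0$, parallel transport produces a flat trivialization $\Psi\colon P\mathcal{H}^{(k)} \xrightarrow{\sim} \mathcal{C}^0_\omega(M) \times PH^{(k)}_{J_0}$. Under $\Psi$ any connection-preserving bundle automorphism must carry flat, and hence constant, sections to constant sections, so the lifted $\mathcal{G}_0$-action takes the form $\phi \cdot (J,[v]) = (\phi \cdot J,\, \rho(\phi)[v])$ for a smooth projective representation $\rho\colon \mathcal{G}_0 \to \mathrm{PGL}(H^{(k)}_{J_0})$ independent of the base point. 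The central $S^1\subset \mathcal{G}_0$ acts by scalars and so projectively trivially, which yields a descent $\overline{\rho}\colon \mathcal{H}_0 \to \mathrm{PGL}(H^{(k)}_{J_0})$; its restriction to $G_{J_0}$ recovers, projectively, the given stabilizer action on the fiber over $J_0$.

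The crux is then the algebraic input: the Lie algebra of Hamiltonian vector fields modulo its one-dimensional centre of constants is simple. The kernel of the differential $d\overline{\rho}$ at the identity is therefore an ideal of this simple Lie algebra, and it must be nonzero because the target $\mathfrak{pgl}(H^{(k)}_{J_0})$ is finite-dimensional while the source is not. Simplicity forces $d\overline{\rho} = 0$. Restricting to $\mathfrak{g}_{J_0} \subset \mathrm{Lie}(\mathcal{H}_0)$ then shows that the infinitesimal action of $G_{J_0}$ on $H^{(k)}_{J_0}$ is by scalars, i.e.\ projectively trivial; since $G_{J_0}$ has positive dimension, this contradicts the hypothesis of non-triviality, completing the proof.

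The hard part will be step three: rigorously establishing the simplicity of the Hamiltonian Lie algebra modulo constants in sufficient generality, together with the smoothness/continuity of $\rho$ in a sense strong enough to activate the ideal-of-a-simple-algebra argument. Also one must verify carefully that the descent from $\mathcal{G}_0$ to $\mathcal{H}_0$ along the central $S^1$ is compatible with the projective structure in the non-compact, infinite-dimensional setting. These are precisely the technical inputs developed in \cite{GM}.
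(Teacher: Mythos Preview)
The paper does not actually prove Theorem~\ref{thm:1}; it is quoted from Ginzburg and Montgomery \cite{GM} and then applied to the coadjoint-orbit example. So there is no ``paper's own proof'' to compare against. Your outline is essentially the argument of \cite{GM}: trivialize the projectivized bundle by parallel transport of the projectively flat, invariant connection over a simply connected neighbourhood of $J_0$, deduce that the $\mathcal{G}_0$-action factors through a finite-dimensional projective representation, and then invoke simplicity of the Poisson Lie algebra $C^\infty(M)/\mathbb{R}$ to force that representation to be projectively trivial.

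Two small points worth tightening. First, your phrase ``the Lie algebra of Hamiltonian vector fields modulo its one-dimensional centre of constants'' is slightly off: the constants sit in the Poisson algebra $C^\infty(M)$, not in the algebra of Hamiltonian vector fields (the Hamiltonian vector field of a constant is zero). The simple object you want is $C^\infty(M)/\mathbb{R}$, which is isomorphic to the Hamiltonian vector fields when $M$ is compact and connected. Second, the dimension argument (``the target $\mathfrak{pgl}(H^{(k)}_{J_0})$ is finite-dimensional while the source is not'') presupposes $\dim H^{(k)}_{J_0}<\infty$, hence implicitly that $M$ is compact; this is the setting of \cite{GM} and of the applications in the paper, but it is worth making explicit since the paper elsewhere also treats non-compact $M$. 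Finally, make sure your notion of ``non-trivial infinitesimal representation'' in the hypothesis is read as \emph{projectively} non-trivial (not acting by scalars), since that is what the argument actually contradicts.
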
 

We will now consider an example where we can apply our construction for a certain small enough neighbourhood of a particular $J_0$ with such a symmetry group.

Let $G$ be a compact simple and simply-connected Lie group. We are going to consider a co-adjoint orbit $M$ in $\Lie{g}^*$. On $M$ we are going to consider the Kirillov-Kostant symplectic structure (see e.g.\cite{W}). Furthermore, we have the natural $G$-invariant complex structure $J_0$ on $M$ coming from the identification
$$ M =G^{\mathbb C}/P,$$
where $P$ is a parabolic subgroup determined $M$. It is well know that $(M,J_0)$ is rigid and that there exist a small enough neighbourhood ${\mathcal C}^0_\omega(M)$ of $J_0$ such that for 
$$H(M_J, T_J) = 0$$ 
for all complex structures $J\in {\mathcal C}^0_\omega(M)$. 

 We now want to determine $\beta(V)_J$ uniquely for all $J\in {\mathcal C}^0_\omega(M)$ and all $V\in T_J{\mathcal C}^0_\omega(M)$ solving
\begin{equation*}
  V'[J]_J=  -\delbar_J \beta(V)_J
\end{equation*}
This we can do uniquely by the above vanishing of $H(M_J, T_J) $ and if we impose suitable conditions on $\beta(V)_J$. One possible such is to require that $\beta(V)_J$ is orthogonal to all homomorphic vector fields on $(M,J)$. Another way could be to require special evaluation properties of $\beta(V)_J$ at various points on $M$. Further, we can determine a smooth family of Ricci potentials, by picking, for each complex structure $J\in {\mathcal C}^0_\omega(M)$, the unique potential with zero average. Hence, since $M$ is simply connected, there is a unique prequantum line bundle $({\mathcal L},\nabla, \langle\cdot,\cdot\rangle)$ with curvature $-i\omega$. Thus we satisfy all assumptions of Theorem \ref{thm:2} and so we get the following corollary.

\begin{corollary}\label{HCC}
For the coadjoint orbit $M$, we get a Hitchin connection in the bundle $H^{(k)}$ over the subspace ${\mathcal C}^0_\omega(M)$. This connection is invariant under the local action of the group of bundle automorphisms of the prequantum line bundle $({\mathcal L},\nabla, \langle\cdot,\cdot\rangle)$ covering the symplectomorphism group of $(M,\omega)$.
\end{corollary}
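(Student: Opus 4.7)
The plan is to check that every hypothesis of Theorem~\ref{thm:2} holds on $\mathcal{C}^0_\omega(M)$, apply that theorem, and then deduce invariance from the fact that each ingredient in the formula for $u(V)$ is specified by intrinsic geometric data. Since $M = G^{\mathbb C}/P$ is a compact generalised flag variety, it is simply connected (so $b_1(M) = 0$) and Fano, and the Kirillov--Kostant symplectic form satisfies $c_1(M,\omega) = n[\omega/(2\pi)]$ for a positive integer $n$ determined by the orbit. Simple-connectedness makes the prequantum line bundle $(\mathcal{L}, \nabla, \langle\cdot,\cdot\rangle)$ with curvature $-i\omega$ unique, and compactness together with Hodge theory produces the smooth family of Ricci potentials $F$, as already noted in the paragraph preceding the corollary.

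The main verification is the weakly restricted condition. For $V \in T_J\mathcal{C}^0_\omega(M)$, the $T'M_J$-valued $(0,1)$-form $V'[J]_J$ is $\bar\partial_J$-closed by the description of $T_J\mathcal{C}_\omega$ recalled in the introduction, and by the assumed vanishing of $H^1(M_J, T'M_J)$ on the neighbourhood it is $\bar\partial_J$-exact; in particular $H^1(M_J, T'M_J)_\omega$ also vanishes. Hence equation~\eqref{Gb} is solvable on $\mathcal{C}^0_\omega(M)$ with the trivial choice $G_{\beta}(V)_J = 0$, and there is $\beta(V)_J \in C^\infty(M, T'M_J)_\omega$ satisfying $\bar\partial_J \beta(V)_J = -V'[J]_J$. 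To pin this choice down uniquely and guarantee smooth dependence on $J$ and linearity in $V$, I normalise $\beta(V)_J$ to be $L^2$-orthogonal to the finite-dimensional space $H^0(M_J, T'M_J)$ of holomorphic vector fields; parameter-dependent elliptic theory for the $\bar\partial$-Laplacian on the compact Kähler manifold $(M, \omega, J)$ then delivers the required smoothness in $(J, V)$. Theorem~\ref{thm:2} now applies directly and produces the Hitchin connection, with the formula for $u(V)$ simplifying since $G_{\beta}(V) \equiv 0$ throughout $\mathcal{C}^0_\omega(M)$.

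For the invariance statement, observe that every auxiliary object entering $u(V)$---the family $F$ of Ricci potentials (characterised by the Ricci equation and zero average), the vector field $\beta(V)$ (characterised by its defining $\bar\partial$-equation together with $L^2$-orthogonality to holomorphic vector fields), and the potentials $\phi(V)$ and $\psi(V)$ (characterised by their respective $\bar\partial$-equations plus zero-average normalisation, which makes sense since $M$ is compact and simply connected)---is determined by intrinsic data on $(M, \omega, J, \mathcal{L})$. Any bundle automorphism of $(\mathcal{L}, \nabla, \langle\cdot,\cdot\rangle)$ covering a symplectomorphism $\varphi$ of $(M,\omega)$ therefore intertwines the constructions at $J$ and at $\varphi_*J$, so $u(V)$ transforms equivariantly and $\hat\nabla$ is invariant under the local action of the group in question. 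The main technical obstacle I anticipate is establishing the smooth $(J,V)$-dependence of $\beta(V)$ across $\mathcal{C}^0_\omega(M)$, but this is a standard consequence of the elliptic theory of the $\bar\partial$-Laplacian with parameters.
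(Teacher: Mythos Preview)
Your proposal follows the paper's approach essentially verbatim: the paragraph preceding the corollary sets up precisely the ingredients you list (vanishing of $H^1(M_J,T'M_J)$ on $\mathcal{C}^0_\omega(M)$, the choice $G_\beta(V)=0$ with $\beta(V)_J$ solving $V'[J]_J=-\bar\partial_J\beta(V)_J$ and normalised by $L^2$-orthogonality to holomorphic vector fields, zero-average Ricci potentials, simple-connectedness yielding the unique prequantum bundle) and then simply invokes Theorem~\ref{thm:2}. Your write-up is in fact more detailed than the paper's, which leaves the invariance assertion without an explicit argument; your observation that every auxiliary datum is pinned down by an intrinsic normalisation is exactly the reason the claimed equivariance holds.
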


We see that this connection therefore satisfies all the requirements of Ginzburg and Montgomery's Theorem \ref{thm:1} above, thus this connection cannot be projectively flat over ${\mathcal C}^0_\omega(M)$. It still remains an interesting question to compute the curvature of this connection and to understand to what extend this connection fails to be projectively flat.

\section{Pullbacks of the Hitchin Connection}
\label{sec:hitch-conn-pullb}

Let us consider a symplectic manifold $(M, \omega)$, and assume that we have a rigid subfamily $\mathcal{T}\subseteq \mathcal{C}_\omega(M)$ of all the complex structures compatible with $\omega$. 

Furthermore we assume, that we have some connected subspace ${\mathcal C}_\omega^0(M)$, on which we can find a map $\Phi\colon \mathcal{C}^0_\omega(M) \to \Diff(M)$ denoted $J\mapsto \Phi_J$, such that for each $J$ there exists a $J'\in \mathcal {T}$ with
\begin{equation*}
  \Phi_J^*(J')=J \quad \text{and}\quad  \Phi_{\mid \mathcal{T}}=\Id.
\end{equation*}
That is $\Phi_J$ gives a biholomorphism from $M$ with the complex structure $J$ to $M$ with the complex structure $J'$ from the rigid family $\mathcal{T}$.

Now for  each $J$ we can consider the pullback bundle $\Phi_J^*\mathcal{L} \to M$, which is naturally isomorphic to $\mathcal{L}$ itself, since $\Phi_J$ is isotopic to the identity for all $J\in \mathcal{C}_\omega^0(M)$. 

Choosing a holomorphic isomorphism $\widetilde{\Psi}_J\colon \mathcal{L}\to \Phi_J^*\mathcal{L}$ we get the following commutative diagram.
  \begin{equation}
    \label{dia1}
    \xymatrix{
        \mathcal{L}\ar[d] \ar[r]^{\widetilde{\Psi}_J} & \ar[d] \ar[r]^p \Phi_J^*\mathcal{L} &\ar[d] \mathcal{L} \\
        M \ar[r]_{\Id} & M \ar[r]_{\Phi_J} & M
}
  \end{equation}
where $p$ is the map given canonically in the construction of the pullback bundle. Composing the maps in the top of the diagram, we get an induced endomorphism on $\mathcal{L}$ given by  $\Psi_J=p\circ\widetilde{\Psi}_J$. We need to fix $\Psi_J$ uniquely up to the action of the automorphism group of the line bundle, $\Aut(\mathcal{L})=C^*$.  We seek a section $\Psi$ of the bundle
\begin{equation*}
  \mathcal{L}(M)=\set{(J,\Psi)\in \mathcal{C}^0_\omega(M)\times \Hom(\mathcal{L},\mathcal{L})\mid  \Psi\colon (L,J)\to (L,J') \text{ holo. for some } J'\in \T}
\end{equation*}
over $\mathcal{D}(M)$, where
\begin{equation*}
  \mathcal{D}(M)=\set{(J,\Phi)\in \mathcal{C}^0_\omega(M)\times \Diff(M) \mid J=\Phi^*(J') \text{ for some } J'\in \T},
\end{equation*}
which in turn is a bundle over ${\mathcal C}^0_\omega(M)$.
If we have one point $x\in M$, which is fixed for all $\Phi_J$, $J\in {\mathcal C}^0_\omega(M)$, then we can fix the ambiguity by requiring that 
\begin{equation*}
  (\Psi_J)_{x}=\Id\colon \mathcal{L}_{x} \to \mathcal{L}_{x},
\end{equation*}
and hence get the required section $\Psi$.
Let us now assume we have a map $$\pi_\mathcal{T}\colon \mathcal{C}^0_\omega(M) \to \mathcal{T},$$ which is compatible with some $\Phi$, then any section $\Psi$ as above will induces an isomorphism of the bundles
\begin{equation}\label{diag2}
  \xymatrix{
    H^{(k)}_{|\mathcal{T}} \ar[dr] &\ar[l] \pi_{\mathcal{T}}^*((H^{(k)})_{|\mathcal{T}}) \ar[rd]  \ar[rr]^{\cong} && H^{(k)} \ar[ld]  \\
    & \mathcal{T} & \ar[l]_{\pi_\mathcal{T}}\mathcal{C}^0_\omega(M)&
  },
\end{equation}
and thus a projectively flat connection on $(H^{(k)})_{|\mathcal{T}}$, which we have by \cite{AG1} induces a projectively flat connection on the pullback bundle, which then gives a projectively flat connection on $H^{(k)}$. Here we have used that 
$$
  \pi^*((H^{(k)})_{|\mathcal{T}})_J=H^0(M_J, \Phi_J^*\mathcal{L}^k),$$
giving us the isomorphism on each fiber, and since the diagram \ref{diag2} commutes, an isomorphism on the level of bundles is obtained.

\begin{ex}
  One example where we can construct $\Phi$ fulfilling the requirements for $(M,\omega)$ is the underlying symplectic manifold of $\P^n$, where of course $\omega$ is the Fubini-Study symplectic form.

On $M$ every complex structure in a small enough neighbourhood $ {\mathcal C}^0_\omega(M)$ of the standard complex structure ($\P^n$ giving $J_0$) on $M$ is biholomorphic to the standard complex structure, so we let $\mathcal{T}=\set{J_0}$, and thus we have $\mathcal{D}(M)_J\neq \emptyset$ for all $J\in {\mathcal C}^0_\omega(M)$.

It is known that $\P^n$ has the property, that there exist $n+1$ points $x_0, x_1, \ldots, x_n\in \P^n$ such that the any set of lifts of these to ${\mathbb C}^{n+1}$ is a basis. Further, we have for any such set of $n+1$ points that there exists a unique $\Phi_0\in \Aut(\P^n)$ mapping $\Phi_0(y_i)=x_i$. This means that we can for any $J\in {\mathcal C}^0_\omega(M)$ determine a unique biholomorphism $\Phi_J : (M, J) \ra (M, J_0)$ such that $\Phi_J(x_i) = x_i$. This way, we can define a section $\Phi \in C^\infty(\mathcal{C}^0_\omega(M),D(M))$.

Note that $\Phi_{J_0}=\Id$, thus $\Phi$ and ${\mathcal T} = \{J_0\}$ fullfils the requirements outlined above. Thus we get a flat connection in $H^{(k)}$ over the entire space ${\mathcal C}^0_\omega(\P^n)$. This connection does however not have the symmetry required by Theorem \ref{thm:1}. Furthermore it does not agree with the connection obtained in Corollary \ref{HCC}, since that connection is not projectively flat, however that connection does have the symmetry properties. 

We expect that a similar construction can be made to work for any coadjoint orbit, by a similar "symmetry breaking" construction.

\end{ex}

\section{Further examples}
\label{sec:examples}
In this sections, we give a number of examples, where we can solve the weakly restricted criterion for open subsets of the entire family of complex structures on a given symplectic manifold and thus get a Hitchin connection on such subspace of all complex structures on the given symplectic manifold.

  The first example we consider is $M=\R{2n}$ with the standard symplectic structure $J_0$ and ${\mathcal C}^0_\omega(M)$ an open and small enough neighbourhood of $J_0$, such that $\mathcal{H}^{(k)}\mid_{{\mathcal C}^0_\omega(M)}$ is a vector bundle over ${\mathcal C}^0_\omega(M)$.

We may also assume that
 $$H^1(M_J, T'M_J) = 0$$
 for all complex structures $J\in {\mathcal C}^0_\omega(M)$. This means that we have a solution to the weakly restricted criterion with $G_\beta(V)=0$ and $\beta(V)_\sigma$ a solution to 
\begin{equation*}
  V'[J]_\sigma=  -\delbar_\sigma \beta(V)_\sigma
\end{equation*}
for all vector fields $V$ and points $\sigma$ on ${\mathcal C}^0_\omega(M)$. We will need a smooth family of $\beta$'s,which we can assume exists by choosing a suitable $ {\mathcal C}^0_\omega(M)$. 

In this case the functions $\phi(V)$ and $\psi(V)$ in the expression of the Hitchin connection from Theorem \ref{thm:2} can be calculated explicitly by a curve integral (depending of course on a choice of base point) of the $\delbar$-exact forms that they are related to by definition.

Now let us consider the symplectic torus $M=\R{2n}/\Z^{2n}$ with the standard symplectic structure $\omega$. In this case, it is not true that the moduli space of complex structures is locally a point. We consider the usual moduli space of linear complex structures compatible with the standard symplectic structure, which is  the moduli space of principal polarised abelian varieties. In fact, the space of all linear complex structures on $\R{2n}$ compatible with $\omega$ can be identification with 
$$ {\mathbb H} =\{Z\in M_{n,n}({\mathbb C})\mid Z= Z^t, \text{Im}(Z) >0\}.$$
The complex structure corresponding to $Z\in {\mathbb H}$ we denote $J_Z$.
It is easy to check that the map
\begin{equation*}
  \cdot \omega : H^0(M_{J_Z}, S^2(T'M_{J_Z})) \ra  H^1(M_{J_Z}, T'M_{J_Z})_\omega
\end{equation*}
is surjective for all $Z\in {\mathbb H}.$ Consider now the maximal connected subspace ${\mathcal C}^0_\omega(M)$ of all complex structures  on $M$, which is compatible with $\omega$ and for which there exist a unique $Z \in {\mathbb H}$ and a unique biholomophism 
$$\Phi_{J}\colon (M, J) \to (M,J_Z),$$
which induces the identity on $H^1(M,{\mathbb Z})$ and which preserves $0\in M$.
Then we of course also have that
\begin{equation} \label{omegaiso}
  \cdot \omega : H^0(M_{J}, S^2(T'M_{J})) \ra  H^1(M_{J}, T'M_{J})_\omega
\end{equation}
is surjective for all complex structures $J\in {\mathcal C}^0_\omega(M)$ and further gives us  a natural projection map 
\begin{equation*}
  \pi \colon \mathcal{C}^0_\omega(M) \to \bH.
\end{equation*}

We now fix a prequantum line bundle $({\mathcal L}, \nabla, \langle\cdot,\cdot\rangle)$ over $(M,\omega)$.
Consider then the bundle of quantum spaces $\tilde{H}^{(k)} \to \bH$, with its usual Hitchin connection (see e.g. \cite{H,A1}) and further the pullback
\begin{equation*}
  \pi^*\tilde{H}^{(k)} \to \mathcal{C}^0_\omega(M).
\end{equation*}
Since each $\Phi_J$ induces the identity on the first homology, we see that $\Phi_J^*{\mathcal L} \cong {\mathcal L}$ and as further $\Phi_J(0) =0,$ we can find a section $\Psi$ as discussed above
which induced an isomorphism of the quantum bundles
\begin{equation*}
  \Psi^* : \pi^*\tilde{H}^{(k)} \ra H^{(k)} .
\end{equation*}
We now pull back the Hitchin connection in $\tilde{H}^{(k)}$ to $\pi^*\tilde{H}^{(k)}$ and push it to $H^{(k)}$ by this isomorphism, to get a projectively flat connection. Again by the no-go Theorem \ref{thm:1}, this connection cannot be natural, which is also clear from its construction. 

We will now show that the constructions of this paper applies to provide a construction of a natural connection in $H^{(k)}$ in certain directions over a subspace of ${\mathcal C}^0_\omega(M)$. Since we have that (\ref{omegaiso}) is surjective, we see that the equation (\ref{Gb}) can be solved for all $J\in {\mathcal C}^0_\omega(M)$ and all tangent vectors $V\in T_J{\mathcal C}_\omega^0(M)$. For any choice of solution to this equation, we get a $\bar \partial$-closed form $\omega\cdot \beta_J(V)$. However, the map
$$ \omega \cdot : H^0(M_J, T'M_J) \ra H^{0,1}(M)$$
is an isomorphism for all $J\in{\mathcal C}_\omega^0(M)$, so we can uniquely determine $\beta_J(V)$ as a solution to (\ref{Gb}), by requiring that $\omega\cdot \beta_J(V) =0 $ in $H^{0,1}(M)$. This in turn means that we can indeed find a unique solution to the equation $\bar\partial \phi(V) = \omega\cdot \beta_J(V) $ of zero average. 

We now consider the linear map 
$$ [\Omega]_J : T_J{\mathcal C}_\omega^0(M) \ra H^{0,1}(M_J).$$
We see that we can apply our Hitchin connection construction along the distribution 
$$\ker [\Omega] \subset T{\mathcal C}_\omega^0(M),$$
simply by just choosing the $\phi(V)$ with zero average which solves
$$\bar \partial \phi(V) = \Omega(V).$$

 The last example we will consider is the moduli spaces $M$ of flat $SU(n)$-connections on a surface of genus $g>1$ possibly with central holonomy around a point on the surface. We have the Seshadri-Goldman-Atiyah-Bott symplectic form $\omega$ on (the smooth part of) $M$ \cite{AB}. We further have the Chern-Simons functional induces the Chern-Simons line bundle $({\mathcal L},\nabla, \langle\cdot,\cdot\rangle)$ over $(M,\omega)$ \cite{Fr,RSW}.
 
 We first consider the usual family of complex structures $J$ parametrized by Teichm\"{u}ller space $\T$. In this situation Hitchin has proved \cite{H} that the map
\begin{equation*}
  \cdot \omega : H^0(M_J, S^2(T'M_J)) \ra  H^1(M_J, T'M_J)_\omega
\end{equation*}
is surjective, for all $J\in \T$.  In analogy with the above torus case, we define ${\mathcal C}^0_\omega(M)$ to be the maximal connected subspace of all complex structures  on $M$, which is compatible with $\omega$ and for which there exist a unique $J' \in \T$ and a unique biholomophism 
$$\Phi_{J}\colon (M, J) \to (M,J'),$$
with the property that it varies smoothly with $J\in {\mathcal C}^0_\omega(M)$ and $\Phi_J = \id$ for all $J\in \T$. But now we see as above that (\ref{Gb}) can always be solved and since there are no holomorphic vector fields on $(M,J)$ for all $J\in {\mathcal C}^0_\omega(M)$ we get a unique $\beta_J(V)$ with the needed properties for all $J\in {\mathcal C}^0_\omega(M)$ and $V\in T_J{\mathcal C}^0_\omega(M)$ . But then we have that Theorem \ref{thm:2} applies and we we get a Hitchin connection in $H^{(k)}$ over all of ${\mathcal C}^0_\omega(M)$. We have here normalized $\psi(V)$ and $\phi(V)$ by requiring that they have zero average over $M$.

\end{document}